\documentclass[11pt]{article}
\usepackage{enumerate}
\usepackage{amssymb,a4wide,latexsym,makeidx,epsfig,fleqn}
\usepackage{float}
\usepackage{amsthm}
\usepackage{amsmath}
\usepackage{enumerate}
\usepackage{color}
\usepackage{graphicx}
\usepackage{multirow}
\usepackage{makecell}
\usepackage{caption}
\usepackage{subcaption}
\DeclareCaptionLabelFormat{continued}{Ðø #1 #2}
\newtheorem{theorem}{Theorem}[section]
\newtheorem{lemma}[theorem]{Lemma}

\newtheorem{problem}[theorem]{Problem}
\newtheorem{claim}{Claim}

\newcommand{\tabincell}[2]{\begin{tabular}{@{}#1@{}}#2\end{tabular}}
\allowdisplaybreaks

\begin{document}
\textwidth 150mm \textheight 225mm

\title{Extremal Laplacian energy of $\overrightarrow{C_{k+1}}$-free digraphs\thanks{\small Supported by the Natural Science Foundation of Shaanxi Province (No. 2025JC-YBQN-108), the China Postdoctoral Science Foundation (No. 2025M783120), and Shandong Postdoctoral Science Foundation (No. SDZZ-ZR-202501340).}}

\author{{Xiuwen Yang$^{a}$, Lin-Peng Zhang$^{b,}$\footnote{Corresponding author.}}\\
{\small $^{a}$ {School of Science, Xi'an University of Posts \& Telecommunications,}}\\{\small Xi'an, Shaanxi 710121, P.R. China.}\\ {\small $^{b}$ School of Mathematics, Shandong University,}\\{\small Jinan, Shandong 250100, P.R. China.}\\{\small E-mail: yangxiuwen1995@163.com, lpzhangmath@163.com }}
\date{}
\maketitle
\begin{center}
\begin{minipage}{135mm}
\vskip 0.3cm
\begin{center}
{\small {\bf Abstract}}
\end{center}
{\small
The Laplacian energy of a digraph $G$ is defined as $\sum_{i=1}^n \lambda_i^2$, where $\lambda_i$ are the eigenvalues of the Laplacian matrix of $G$. A (di)graph $G$ is said to be $H$-free if it does not contain a copy of the fixed (di)graph $H$ as a sub(di)graph. In this paper, we extend the Tur\'{a}n problems to spectral Tur\'{a}n problems in digraphs: what is the maximal Laplacian energy of an $H$-free digraph of given order? In particular, we determine the maximum Laplacian energy and characterize the extremal digraphs of $\overrightarrow{C_{k+1}}$-free digraphs.
\vskip 0.1in \noindent {\bf Key Words}:\ spectral Tur\'{a}n problem; Laplacian energy; $\overrightarrow{C_{k+1}}$-free digraph \vskip
0.1in \noindent {\bf AMS Subject Classification (2020)}: \ 05C20, 05C35, 05C50}
\end{minipage}
\end{center}

\section{Introduction}

Before delving into essential terminology and notation, we first offer a brief introduction to establish the necessary background and motivation for our paper.

\subsection{Background and motivation}

Our work is motivated by recent advances in extremal graph theory, particularly the rapidly growing field of spectral Tur\'{a}n problem. A (di)graph $G$ is said to be $H$-free if it does not contain a copy of the fixed (di)graph $H$ as a sub(di)graph. The Tur\'{a}n problem is a classical extremal problem in extremal graph theory that considers the maximum size of an $H$-free graph of given order.
In 2010, Nikiforov~\cite{Ni10} proposed a spectral Tur\'{a}n problem: what is the maximal spectral radius of a $H$-free graph of given order? This is also known as Brualdi-Solheid-Tur\'{a}n type problem. In early work, Wilf~\cite{Wi} investigated the problem of maximum spectral radius of $K_{r+1}$-free graphs of given order. Nikiforov~\cite{Ni08} studied the maximum spectral radius problem for graphs forbidding odd cycles, while Nikiforov~\cite{Ni10} further explored the case of forbidden paths. Furthermore, Nikiforov~\cite{Ni10} proposed conjectures regarding the maximum spectral radius for graphs forbidding%%excluding 
cycles and trees. Recently, Cioab\u{a} et al.~\cite{CiFT} determined the maximum spectral radius and the corresponding extremal graphs for graphs forbidding friendship graph. Zhai et al.~\cite{ZhLS} resolved the same problem for $K_{2,r+1}$-free graphs of given size. Later, Zhai and Lin~\cite{ZhLi22} characterized the extremal graphs attaining the maximum spectral radius among graphs forbidding $K_{s,t}$-minors. Cioab\u{a} et al.~\cite{CiDT} investigated the spectral extremal problem for graphs forbidding even cycles. More results on spectral Tur\'{a}n problems see~\cite{GoHo,LiNi1,LiNi2,LiSY,LiNW,Ni02,Ni08,No,ZhLi20,ZhLi23,ZhSh}. But for digraphs, there are only a few results about spectral Tur\'{a}n problem. In~\cite{BrLi,Dr}, Brualdi, Li and Drury %the authors 
determined the maximal spectral radius and extremal digraph of $\overrightarrow{C_2}$-free digraphs: if $n$ is odd, the extremal digraph is a tournament which has indegree and outdegree $\frac{n-1}{2}$ at each vertex; if $n$ is even, the extremal digraph is a tournament which is isomorphic to the Brualdi-Li tournament.

All the aforementioned results concern the spectral radius of adjacency matrices of graph. %They consider the maximum adjacency spectral radius problem of given order or size.%
This leads to the natural extremal question: what is the maximal adjacency spectral radius of an $H$-free digraph of given order or size? However, the present work primarily investigates the Laplacian energy of digraphs. As is known, the classical Tur\'{a}n problem and its spectral analogue are intrinsically related for graphs. A natural question arises: does a similar relation hold for digraphs? While this is a compelling research direction, it remains largely unexplored. However, for the Laplacian energy of digraphs, which is fundamentally tied to the sum of squares of outdegrees (and thus to the arc count), a link to the digraph Tur\'{a}n problem appears more plausible. This possibility forms the primary motivation for our work: to investigate the spectral Tur\'{a}n problem for Laplacian energy by asking what is the maximum Laplacian energy of an $H$-free digraph with given order or size?

%As we known, on graphs, the Tur\'{a}n problem and the spectral Tur\'{a}n problem are related in some way. So are they related on digraphs? Although this is clearly an important research direction, the answer remains unknown. But for Laplacian energy of digraphs, we know that the Laplacian energy of digraphs is related to the sum of the squares of the outdegrees, and the sum of the outdegrees is equal to the number of arcs. So, is the maximum Laplacian energy problem related to the digraph Tur\'{a}n problem? This constitutes the primary motivation for our investigation of the spectral Tur\'{a}n problems concerning Laplacian energy of digraphs: what is the maximal Laplacian energy of an $H$-free digraph of given order?

Classic Tur\'{a}n problems form a cornerstone of extremal combinatorics, originating from Tur\'{a}n's generalization of Mantel's theorem. We recall the classic Tur\'{a}n theorem: for any integers $n,k\ge 1$, the unique $K_{k+1}$-free graph on 
$n$ vertices with the maximum number of edges is the Tur\'{a}n graph $T_{n,k}$—the complete $k$-partite graph with parts as equal as possible. More generally, for a given (di)graph $H$ and $n\in \mathbb{N}^+$, we denote by $\textnormal{ex}(n,H)$ the Tur\'{a}n number (the maximum number of edges/arcs in an $H$-free (di)graph of order $n$), and by 
$\textnormal{EX}(n,H)$ the family of all extremal (di)graphs attaining this bound.

%In graph theory, Tur\'{a}n problems represent a fundamental class of extremal problems, tracing their origins to Tur\'{a}n's generalization of Mantel's theorem. We only introduce the Tur\'{a}n theorem here. For complete graphs $K_{k+1}$, Tur\'{a}n's theorem \cite{Ju} obtained the unique $K_{k+1}$-free extremal graph of order $n$ is the Tur\'{a}n graph $T_{n,k}$, where the Tur\'{a}n graph is a complete $k$-partite graph with an almost balanced partition on $n$ vertices. For any given graphs (or digraphs) $H$ and $n\in \mathbb{N}^+$, let $\textnormal{ex}(n,H)$ denote the Tur\'{a}n number and $\textnormal{EX}(n,H)$ denote the family of all $H$-free extremal graphs (or digraphs).

\noindent\begin{theorem}\label{th:exK}(\cite{Ju}) Let $k,n\in \mathbb{N}^+$, $n=qk+r$, $0\leq r< k$. Then
$$\textnormal{ex}(n,K_{k+1})=\frac{k-1}{2k}n^2-\frac{r(k-r)}{2k},$$
and $\textnormal{EX}(n,K_{k+1})=\{T_{n,k}\}$.
\end{theorem}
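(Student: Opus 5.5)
I will follow the classical Zykov symmetrization argument. The two ingredients are a structural step showing that some extremal $K_{k+1}$-free graph is complete multipartite, and an optimization step showing the parts must be balanced.

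\textbf{Structural step.} Let $G$ be a $K_{k+1}$-free graph on $n$ vertices with the maximum number of edges. First I show that non-adjacency is transitive in $G$. Suppose instead there are vertices $u,v,w$ with $uv,vw\notin E(G)$ but $uw\in E(G)$.
\begin{itemize}
\item If $d(v)<d(u)$, delete $v$ and add a clone $u'$ of $u$, adjacent to exactly $N(u)$. The new graph is still $K_{k+1}$-free, since a clique contains at most one of the non-adjacent twins $u,u'$. It has strictly more edges, which contradicts maximality. The case $d(v)<d(w)$ is symmetric.
\item Otherwise $d(v)\ge d(u)$ and $d(v)\ge d(w)$. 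Delete both $u$ and $w$ and add two clones of $v$. The edge count changes by
\[
2d(v)-\bigl(d(u)+d(w)-1\bigr)\ge 1>0.
\]
The term $-1$ corrects for the edge $uw$ being counted twice, and $u,w\notin N(v)$ ensures the clones keep degree $d(v)$. The new graph is again $K_{k+1}$-free, which contradicts maximality.
\end{itemize}
So non-adjacency is an equivalence relation, and $G$ is complete $p$-partite. Since $G$ is $K_{k+1}$-free, $p\le k$. If $p<k$ and some part has at least $2$ vertices, splitting off one vertex into a new part adds edges without creating $K_{k+1}$. If $n\ge k$, this forces $G$ to be complete $k$-partite. (For $n<k$, $G=K_n$, which is $T_{n,k}$ with some empty parts; I adopt that convention.)

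\textbf{Balancing step.} Let the part sizes be $n_1,\dots,n_k$. Then
\[
e(G)=\tfrac12\Bigl(n^2-\sum_i n_i^2\Bigr).
\]
If $n_i\ge n_j+2$ for some $i,j$, moving one vertex from part $i$ to part $j$ changes $\sum_i n_i^2$ by $-2(n_i-n_j-1)<0$, so $e(G)$ strictly increases. Hence maximality forces all part sizes to differ by at most $1$. Then $r$ parts have size $q+1$ and $k-r$ parts have size $q$, that is, $G=T_{n,k}$.

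\textbf{Computation.} I compute
\[
\sum_i n_i^2=r(q+1)^2+(k-r)q^2=kq^2+2rq+r,
\]
and substitute $q=(n-r)/k$. This gives
\[
\sum_i n_i^2=\frac{n^2-r^2}{k}+r=\frac{n^2+r(k-r)}{k}.
\]
Therefore
\[
e(T_{n,k})=\frac{k-1}{2k}n^2-\frac{r(k-r)}{2k}.
\]

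\textbf{Main obstacle.} The delicate point is uniqueness in the structural step: showing that every extremal graph, not just some extremal graph, is complete multipartite. The argument above handles this because each modification produces a strict increase in the edge count whenever non-adjacency fails to be transitive. The only care needed is to verify the $-1$ correction for the edge $uw$ and the fact that the clones keep degree $d(v)$, both of which rely on $u,w\notin N(v)$.
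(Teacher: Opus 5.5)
The paper gives no proof of this statement. It is Turán's theorem, quoted from the literature and used only as a black box, through the Brown--Harary formulas, to get $\textnormal{ex}(n,\overrightarrow{C_2})$ and $\textnormal{ex}(n,\overrightarrow{C_3})$. So there is nothing in the paper to follow or deviate from, and your proposal has to stand on its own. It does: your Zykov symmetrization argument is correct and complete, and I checked each step.

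\begin{itemize}
\item The two cloning moves preserve $K_{k+1}$-freeness. A clique meets a set of pairwise non-adjacent twins in at most one vertex, and that vertex can be replaced by the original.
\item The edge counts are right. The $+1$ correction for $uw$ and the fact that the clones of $v$ keep degree $d(v)$ both use $u,w\notin N(v)$.
\item Every move gives a strict increase. Hence every extremal graph, not just one, is complete multipartite. It then has exactly $k$ parts when $n\ge k$ (by the splitting move) and is balanced (by the moving move).
\item The identity $\sum_i n_i^2=\frac{n^2+r(k-r)}{k}$ is computed correctly. With $q=0$, $r=n$ the formula also returns $\binom{n}{2}$ in the degenerate case $n<k$, consistent with your convention of empty parts.
\end{itemize}

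Two other textbook routes are worth comparing.
\begin{itemize}
\item Induction on $n$, deleting a copy of $K_k$, gives the exact value quickly, using $e(T_{n,k})=e(T_{n-k,k})+\binom{k}{2}+(k-1)(n-k)$. However, it needs a separate equality analysis to get uniqueness.
\item The Motzkin--Straus weight-shifting argument only yields the weaker bound $(1-\frac1k)\frac{n^2}{2}$.
\end{itemize}
Your symmetrization determines the extremal structure first, so the exact value and the characterization $\textnormal{EX}(n,K_{k+1})=\{T_{n,k}\}$ come out together.
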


Although the Tur\'{a}n problem has been extensively studied for graphs, its extension to digraphs remains relatively unexplored.
%While most classical Tur\'{a}n results have been established for graphs, corresponding studies for digraphs remain relatively scarce. 
Brown and Harary~\cite{BrHa} resolved Tur\'{a}n problems for complete digraphs $\overleftrightarrow{K_{k+1}}$ and tournaments $\overrightarrow{T_{k+1}}$.
\noindent\begin{theorem}\label{th:exCT}(\cite{BrHa}) Let $k,n\in \mathbb{N}^+$. Then
$$\textnormal{ex}(n,\overleftrightarrow{K_{k+1}})=\binom{n}{2}+\textnormal{ex}(n,K_{k+1})\ \
\textnormal{and}\ \ \textnormal{ex}(n,\overrightarrow{T_{k+1}})=2\textnormal{ex}(n,K_{k+1}).$$
\end{theorem}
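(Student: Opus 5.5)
Both parts of the statement reduce to Theorem~\ref{th:exK}: I decompose each digraph into its underlying pair structure and forbid the appropriate substructure on a simple graph. Fix an $n$-vertex digraph $D$ (no loops, no multiple arcs). For each unordered pair $\{u,v\}$, let $D$ contain $0$, $1$ or $2$ of the arcs $uv,vu$. Let $G_2(D)$ be the simple graph of pairs carrying both arcs, and $G_{\ge1}(D)$ the graph of pairs carrying at least one arc. Then
\[
|A(D)| = |E(G_{\ge1}(D))| + |E(G_2(D))| .
\]

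\emph{Complete digraphs.} $D$ contains $\overleftrightarrow{K_{k+1}}$ iff $G_2(D)$ contains $K_{k+1}$. If $D$ is $\overleftrightarrow{K_{k+1}}$-free, then $G_2(D)$ is $K_{k+1}$-free, so
\[
|A(D)|\le \binom n2+\textnormal{ex}(n,K_{k+1}).
\]
This bound is attained: take every pair with at least one arc, and double exactly the edges of $T_{n,k}$, orienting the remaining pairs arbitrarily. This gives the first identity.

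\emph{Tournaments, lower bound.} Replace each edge of $T_{n,k}$ by both arcs. Any $k+1$ vertices contain two in the same part, which are nonadjacent. So this digraph contains no tournament on $k+1$ vertices, whatever its orientation, and it has $2\,\textnormal{ex}(n,K_{k+1})$ arcs.

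\emph{Tournaments, upper bound.} This is the main step. A copy of $\overrightarrow{T_{k+1}}$ in $D$ needs $k+1$ vertices that are pairwise adjacent in $G_{\ge1}(D)$, with the orientations matching $\overrightarrow{T_{k+1}}$ on the single-arc pairs; double pairs can be oriented freely. Since $\overrightarrow{T_{k+1}}$ is a fixed tournament, a $K_{k+1}$ in $G_{\ge1}(D)$ does not automatically yield it. However, $K_{k+1}\subseteq G_2(D)$ certainly does, so $G_2(D)$ is $K_{k+1}$-free.

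That alone is not enough, because $|E(G_{\ge1})|$ can be as large as $\binom n2$. I need the combined inequality
\[
|E(G_{\ge1})|+|E(G_2)|\le 2\,\textnormal{ex}(n,K_{k+1}).
\]
I would prove it by induction on $n$, in the style of Zykov or of the standard proof of Tur\'an's theorem. Define the weight of a pair as its number of arcs. Take a maximal weighted clique $S$ in which all pairs have weight $\ge1$, and analyse the total weight from each outside vertex to $S$. If an outside vertex sent weight $\ge 2|S|-1$ into $S$ with $|S|=k$, I would try to exhibit a copy of $\overrightarrow{T_{k+1}}$ inside $S$ plus that vertex. The key sub-claim is that every tournament on $k$ vertices that has a vertex joined by doubled arcs to all but one vertex extends suitably; more precisely, I would use the fact that, with enough double arcs, one can always embed a given tournament.

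I expect this counting to be the obstacle. For general $\overrightarrow{T_{k+1}}$ (e.g.\ the transitive one), large single-arc cliques of the "wrong" orientation can avoid it. An example is an acyclic orientation of $K_n$ when $\overrightarrow{T_{k+1}}$ is not transitive; it has $\binom n2$ arcs, which can exceed $2\,\textnormal{ex}(n,K_{k+1})$ only when $k$ is small, so the induction must use the weighted degree bound
\[
\sum_{v\in S} w(x,v)\le 2(|S|-1).
\]
Failing a clean argument, I would cite Brown--Harary's original counting, as the paper does.
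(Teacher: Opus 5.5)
The paper gives no proof of this statement; it simply cites Brown and Harary, so falling back on that citation matches the paper. Your first identity is correct and complete: it follows from $|\mathcal{A}(D)|=|E(G_{\ge1})|+|E(G_2)|$ together with the fact that $G_2$ is $K_{k+1}$-free. Your lower-bound construction for $\overrightarrow{T_{k+1}}$, the doubled $T_{n,k}$, is also correct.

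The genuine gap is the upper bound $\textnormal{ex}(n,\overrightarrow{T_{k+1}})\le 2t_k(n)$, where $t_k(n)=\textnormal{ex}(n,K_{k+1})$. You do not prove it, and the sketched induction fails at its key step. Suppose $S$ is a $k$-clique of $G_{\ge1}$ that contains single arcs. A vertex $x$ joined to $S$ by $2|S|$ arcs need not create $\overrightarrow{T_{k+1}}$, because the single arcs of $S$ have fixed orientations. For example, take $k=3$, let $S$ be a directed triangle, and join $x$ to every vertex of $S$ in both directions. Then $S\cup\{x\}$ contains no transitive tournament on $4$ vertices. So the bound $\sum_{v\in S}w(x,v)\le 2(|S|-1)$ is unfounded.

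Restricting $S$ to a doubled $K_k$ repairs the extension step. But then the case where $G_2$ is $K_k$-free remains, and counting alone cannot settle it. Doubling $T_{n,k-1}$ and placing tournaments inside its parts gives $\binom n2+t_{k-1}(n)$ arcs with $G_2$ $K_k$-free. This exceeds $2t_k(n)$ for $k\ge3$ and large $n$. That digraph does contain $\overrightarrow{T_{k+1}}$, but it shows this case needs the structure of tournaments. Incidentally, $\binom n2\le 2t_k(n)$ for every $k\ge2$, so your acyclic example never competes.

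The missing idea is to induct on a richer configuration that still contains every tournament. Call a vertex set $U$ \emph{good} if every pair in $U$ carries at least one arc, and the pairs carrying exactly one arc form vertex-disjoint directed paths.

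\emph{Good sets contain every tournament.} By R\'edei's theorem, every tournament on $k+1$ vertices has a Hamiltonian directed path $u_1\to\cdots\to u_{k+1}$. List the paths of a good $(k+1)$-set consecutively as $y_1,\dots,y_{k+1}$ and map $u_i\mapsto y_i$. This embeds the tournament, because every single arc of $U$ is some $(y_i,y_{i+1})$ and all other pairs are doubled.

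\emph{Forcing a good $(k+1)$-set.} Prove, by induction on $k$ and then on $n$, that $|\mathcal{A}(D)|>2t_k(n)$ forces a good $(k+1)$-set; the case $k=1$ is trivial.
\begin{itemize}
\item If $D$ has no good $k$-set, induction on $k$ gives $|\mathcal{A}(D)|\le 2t_{k-1}(n)\le 2t_k(n)$.
\item Otherwise take a good $k$-set $S$ with the fewest single pairs.
\item If some $x\notin S$ is joined to $S$ by $2k$ arcs, then $S\cup\{x\}$ is good.
\item If $x$ is joined by $2k-1$ arcs, with single pair $\{x,y\}$, there are two options. If $y$ lies on no single pair of $S$, then $S\cup\{x\}$ is good. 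Otherwise $(S\setminus\{y\})\cup\{x\}$ is a good $k$-set with fewer single pairs, a contradiction.
\end{itemize}
Hence every outside vertex is joined to $S$ by at most $2(k-1)$ arcs. Applying the induction on $n$ to $D-S$ gives
\[
|\mathcal{A}(D)|\le 2\binom k2+2(k-1)(n-k)+2t_k(n-k)=2t_k(n).
\]
This yields the upper bound without relying on the citation.
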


Howalla et al.~\cite{HoDT1,HoDT2} determined the maximum number of arcs in digraphs forbidding $k$ directed paths with common endpoints for $k=2,3$. Huang and Lyu~\cite{HuLy} investigated Tur\'{a}n problems for oriented $C_4$ with specific edge directions. Zhou and Li~\cite{ZhLi} characterized the maximum size and extremal %maximum 
digraphs for forbidden directed paths, directed cycles, and another variant of oriented $C_4$. 

As we discussed before, this work
is devoted to investigating spectral Tur\'{a}n problems concerning Laplacian energy of $\overrightarrow{C_{k+1}}$-free digraphs.
In 1970,  Gutman~\cite{Gut1} first introduced the energy of adjacency matrix of a graph as the sum of the absolute values of the eigenvalues. More recently, in 2006, Gutman and Zhou~\cite{GuZh} defined the Laplacian energy of a graph as the sum of the absolute values of the differences between Laplacian eigenvalues and the average degree. In the same year, Lazi\'c~\cite{La} defined the Laplacian energy of a graph as the sum of the squares of the Laplacian eigenvalues. In 2010, Perera and Mizoguchi~\cite{PeMi} extended the Lazi\'c version's definition to digraphs. In 2015, Qi et al.~\cite{QiFL} obtained lower and upper bounds on the Laplacian energy of digraphs and also characterized the extremal digraphs.
In 2020, Yang and Wang~\cite{YaWa} determined the directed trees, unicyclic digraphs and bicyclic digraphs which attain the maximal and minimal Laplacian energy among all digraphs with $n$ vertices, respectively. In 2024, Yang et al.~\cite{YaBW} characterized the digraphs which attain the minimal and maximal Laplacian energy within classes of digraphs with a fixed dichromatic number. We refer the interested reader to the three monographs~\cite{Gut2,GuLi,LiSG} for a wide range of alternative definitions for energies of graphs and digraphs, and a wealth of references to obtained results.

Before we present our results and proofs, we will recall some of the essential terminology and notation, and give some additional background and related results.

\subsection{Terminology and notation}

For a digraph $G$, we use $\mathcal{V}(G)$ and $\mathcal{A}(G)$ to denote the vertex set and arc set of $G$, respectively, and we use $n=|\mathcal{V}(G)|$ and $e=|\mathcal{A}(G)|$ to denote the order and size of $G$, respectively. We denote an arc from a vertex $u$ to a vertex $v$ by $(u,v)$, and we call $u$ the tail and $v$ the head of the arc $(u,v)$. For a vertex $v\in \mathcal{V}(G)$, the outdegree $d_G^+(v)$ is the number of arcs in $\mathcal{A}(G)$ whose tail is $v$, while the indegree $d_G^-(v)$ is the number of arcs in $\mathcal{A}(G)$ whose head is $v$. For two disjoint subdigraphs $G_1, G_2\subseteq G$, we write $G_1\rightarrow G_2$ if $(u,v)\in\mathcal{A}(G)$ for every $u\in\mathcal{V}(G_1)$ and $v\in\mathcal{V}(G_2)$, and $G_1\nrightarrow G_2$ if $(u,v)\notin\mathcal{A}(G)$ for every $u\in\mathcal{V}(G_1)$ and $v\in\mathcal{V}(G_2)$. We also use $G_1\mapsto G_2$ to denote $G_1\rightarrow G_2$ and $G_1\nrightarrow G_2$. correct? maybe $G_2\nrightarrow G_1$?

A directed walk $\pi$ of length $\ell$ from vertex $u$ to vertex $v$ in $G$ is a sequence of vertices $\pi$: $u=v_0,v_1,\ldots,v_{\ell}=v$, where $(v_{k-1},v_k)$ is an arc of $G$ for any $1\leq k\leq \ell$. If $u=v$, then $\pi$ is called a directed closed walk. If all vertices of the directed walk $\pi$ of length $\ell$ are distinct, then we call it a directed path, and denote it by $\overrightarrow{P_{\ell+1}}$; a directed closed walk of length $\ell$ in which all except the end vertices are distinct is called a directed cycle, and denoted by $\overrightarrow{C_{\ell}}$. We let $c_2$ denote the total number of directed closed walks of length $2$. Throughout the remainder of the paper, we consider only connected digraphs without loops or multiple arcs.

The adjacency matrix $A(G)=(a_{ij})$ of $G$ is an $n\times n$ matrix whose $(i,j)$-entry equals $1$ if $(v_i, v_j)\in\mathcal{A}(G)$ and equals $0$ otherwise. The diagonal outdegree matrix $D^+(G)$ of $G$ is defined by $D^+(G)=diag(d_1^+,d_2^+,\ldots,d_n^+)$. The Laplacian matrix $L(G)$ of $G$ is defined by $L(G)=D^+(G)-A(G)$. Hence,
$L(G)=(\ell_{ij})$ is an $n \times n$ matrix, where
$$\ell_{ij}=
\begin{cases}
d_i^+,& \mbox{if} \ i=j,\\
-1,& \mbox{if} \ (v_i, v_j)\in\mathcal{A}(G),\\
0,& \mbox{otherwise}.
\end{cases}$$
Let $\lambda_1, \lambda_2, \ldots, \lambda_n$ are the eigenvalues of $L(G)$. Perera and Mizoguchi~\cite{PeMi} defined the Laplacian energy of a digraph as the sum of the squares of the Laplacian eigenvalues:
$$LE(G)=\sum_{i=1}^n \lambda_i^2.$$
Qi et al.~\cite{QiFL} gave the formula of Laplacian energy of digraphs:
$$LE(G)=\sum_{i=1}^n(d^+_i)^2+c_2.$$

Next, we introduce several common classes of digraphs.

A tournament is a digraph obtained from an undirected complete graph by assigning a direction to each edge. A transitive tournament is a tournament $G$ satisfying the following condition: if $(u,v)\in \mathcal{A}(G)$ and $(v,w)\in \mathcal{A}(G)$, then $(u,w)\in \mathcal{A}(G)$.

Let $\overset{\longleftrightarrow}{K_{n}}$ denote the complete digraph on $n$ vertices in which for two arbitrary distinct vertices $v_i,v_j\in  \mathcal{V}(\overset{\longleftrightarrow}{K_{n}})$, there are arcs $(v_i,v_j)$ and $(v_j,v_i)\in \mathcal{A}(\overset{\longleftrightarrow}{K_{n}})$.

A balanced complete bipartite digraph is a bipartite directed graph satisfying:
\begin{itemize}
  \item [(i)] the vertex set $\mathcal{V}$ is partitioned into two disjoint subsets $X$ and $Y$, with $\big||X|-|Y|\big|\leq1$;
  \item [(ii)] for every $x\in X$ and $y\in Y$, there are two arcs $(x,y)$ and $(y,x)$;
  \item [(iii)] there are no arcs between vertices within the same part $X$ or $Y$.
\end{itemize}

\subsection{Related work}

For any given graphs (or digraphs) $H$ and $n\in \mathbb{N}^+$, let $\textnormal{ex}_{LE}(n,H)$ denote the maximum Laplacian energy of $H$-free digraphs and $\textnormal{EX}_{LE}(n,H)$ denote the family of all digraphs which have the maximum Laplacian energy of $H$-free digraphs. As mentioned earlier, this paper aims to obtain the maximal Laplacian energy of an $\overrightarrow{C_{k+1}}$-free digraph of given order. Before presenting our results, we first introduce the Tur\'{a}n number of an $\overrightarrow{C_{k+1}}$-free digraph.

\begin{figure}[htbp]
\centering
\includegraphics[scale=1]{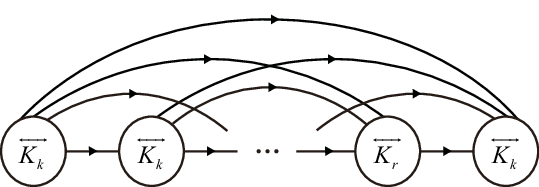}
\caption{Digraph $\protect\overrightarrow{F_{n,k}}$ ($q$ copies of $\protect\overleftrightarrow{K_k}$ and one copy of $\protect\overleftrightarrow{K_r}$)}\label{fi:Fnk}
\end{figure}

For integers $n,k\in \mathbb{N}^+$, $n=qk+r$, $0\leq r<k$, we define a class of digraphs on $n$ vertices, denoted by $\overrightarrow{F_{n,k}}$, which satisfies the following properties (set $q'=q$ if $r=0$ and $q'=q+1$ otherwise):
\begin{itemize}
  \item [(i)] $\mathcal{V}(\overrightarrow{F_{n,k}})$ has a partition $\{V_1, V_2, \ldots, V_{q'}\}$ such that $|V_i|=k$ for all but at most one $V_i$;
  \item [(ii)] $\overrightarrow{F_{n,k}}[V_i]$ is a complete digraph for $1\le i\le q'$;
  \item [(iii)] for any two parts $V_i$ and $V_j$ with $1\leq i<j\leq q'$, $V_i \mapsto V_j$ (see Figure~\ref{fi:Fnk}).
\end{itemize}

Note that there may be more than one nonisomorphic $\overrightarrow{F_{n,k}}$ for given $n, k$. However all $\overrightarrow{F_{n,k}}$ have the same size. We denote by $\overrightarrow{\mathcal{F}_{n,k}}$ the set of all $\overrightarrow{F_{n,k}}$. If $0< r<k$, let $\overrightarrow{{F}_{n,k}^{s+1}}\in\overrightarrow{\mathcal{F}_{n,k}}$ such that $\overrightarrow{F_{n,k}}[V_{s+1}]=\overleftrightarrow{K_r}$ and $\overrightarrow{F_{n,k}}[V_{i}]=\overleftrightarrow{K_k}$ for $i=1,\ldots,s,s+2,\ldots,q+1$, where $s=0,1,\ldots,q$. If $r=0$, then every $\overrightarrow{F_{n,k}}[V_i]=\overleftrightarrow{K_k}$ for $1\le i\le q$. By this time, $\overrightarrow{\mathcal{F}_{n,k}}$ has only one type and denoted by $\overrightarrow{F_{n,k}^0}$.

In 2023, Zhou and Li~\cite{ZhLi} determined the Tur\'{a}n number and the extremal digraphs for $\overrightarrow{C_{k+1}}$, $k\geq3$.

\noindent\begin{theorem}\label{th:exC}(\cite{ZhLi}) Let $k,n\in \mathbb{N}^+$, $n=qk+r$, $0\leq r<k$. Then
$$\textnormal{ex}(n,\overrightarrow{C_{k+1}})=\frac{1}{2}n^2+\frac{k-2}{2}n-\frac{r(k-r)}{2}.$$
Furthermore, $\textnormal{EX}(n,\overrightarrow{C_{k+1}})=\overrightarrow{\mathcal{F}_{n,k}}$ for $k\geq3$.
\end{theorem}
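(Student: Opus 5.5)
The plan is to prove the lower bound by construction and the upper bound by reducing to strongly connected digraphs. Write $f(n)=\frac12 n^2+\frac{k-2}{2}n-\frac{r(k-r)}{2}$.

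\emph{Lower bound.} I first check that every $\overrightarrow{F_{n,k}}$ attains $f(n)$ and is $\overrightarrow{C_{k+1}}$-free.
\begin{itemize}
\item Inside the parts there are $qk(k-1)+r(r-1)$ arcs.
\item Between the parts there is exactly one arc per pair, giving $\binom n2-q\binom k2-\binom r2$ arcs.
\item Substituting $n=qk+r$, the total simplifies to $f(n)$.
\item Since $V_i\mapsto V_j$ for $i<j$, the parts are exactly the strong components, ordered acyclically. Every directed cycle therefore lies inside one $V_i$, which has at most $k$ vertices, so there is no $\overrightarrow{C_{k+1}}$.
\end{itemize}
This same observation drives the upper bound. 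Every directed cycle lives in a strong component, and between two distinct strong components each pair of vertices carries at most one arc.

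\emph{Reduction to strong digraphs.} Let $G$ be $\overrightarrow{C_{k+1}}$-free with strong components $S_1,\dots,S_t$ of sizes $n_1,\dots,n_t$. Then
$$e(G)\le \sum_i e(S_i)+\sum_{i<j}n_in_j .$$
I use the decomposition $f(n)=\binom n2+g(n)$, where $g(n)=\frac{k-1}{2}n-\frac{r(k-r)}{2}$. Note that $g(n)$ is the maximum number of edges of an undirected graph whose components have at most $k$ vertices, namely $qK_k\cup K_r$.

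It then suffices to prove the following claim: \emph{a strong $\overrightarrow{C_{k+1}}$-free digraph $S$ on $m$ vertices satisfies $e(S)\le \binom m2+h(m)$, where $h(m)=\binom m2$ if $m\le k$ and $h(m)$ is strictly smaller than the value forced by the components-of-size-$\le k$ bound otherwise.}

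Granting the claim, summing over components reduces the problem to maximizing $\sum_i h(n_i)$. This is the undirected extremal problem for disjoint cliques of size at most $k$. Its optimum is $g(n)$, attained exactly when all components except at most one are $\overleftrightarrow{K_k}$ and the rest is a single $\overleftrightarrow{K_r}$. This optimization is a routine superadditivity/convexity check on $\binom{x}{2}$ with the constraint $x\le k$.

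For equality, every component must be complete of size $k$ (one of size $r$), and every between-component pair must carry an arc. This is exactly the family $\overrightarrow{\mathcal{F}_{n,k}}$.

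\emph{Strong components with more than $k$ vertices.} The main obstacle is to show that a strong component with $m\ge k+1$ vertices is strictly worse than splitting it. Equivalently, I must show
$$e(S)-\binom m2=(\#\text{digons})-(\#\text{non-adjacent pairs})<g(m).$$
I would attack this through the undirected graph $D$ of digons, in two steps.
\begin{itemize}
\item Take any path $v_0v_1\cdots v_k$ in $D$ on $k+1$ vertices. This path can be traversed in both directions. An arc between $v_0$ and $v_k$ in either direction would close a $\overrightarrow{C_{k+1}}$. Hence the endpoints of every such path are non-adjacent in $S$.
\item If $D$ has no $P_{k+1}$, the Erdős–Gallai bound gives $e(D)\le\frac{k-1}{2}m$. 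I would refine this to the component version, since the equality case $qK_k\cup K_r$ gives exactly $g(m)$. Strong connectivity with $m>k$ should then force at least one missing pair or a non-clique component, yielding strict inequality.
\item If $D$ has long paths, I charge each excess digon to a distinct non-adjacent pair coming from endpoints of $P_{k+1}$'s in $D$. Strong connectivity is used here, via an ear decomposition, to show that long digon paths cannot simultaneously have all their $k$-step endpoints adjacent.
\end{itemize}
I expect this charging argument to be the delicate step. If it fails, the fallback is induction on $m$: delete a vertex of minimum total degree (or a suitable ear of the ear decomposition) from $S$, and use that $k\ge 3$ rules out the small exceptional configurations that occur for $k\le2$.
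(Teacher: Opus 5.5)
The paper gives no proof of this statement; it quotes it from Zhou and Li \cite{ZhLi}. So I can only judge your argument on its own, and on its own it has a genuine gap at the central step.

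The outer framework is sound:
the arc count of $\overrightarrow{F_{n,k}}$, the bound $e(G)\le\sum_i e(S_i)+\sum_{i<j}n_in_j$, the identity $e(S)-\binom m2=(\#\text{digons})-(\#\text{non-adjacent pairs})$, superadditivity of $g$, and the equality analysis all check out. But all of the theorem's content is pushed into one claim: a strong $\overrightarrow{C_{k+1}}$-free digraph on $m\ge k+1$ vertices has fewer than $\binom m2+g(m)$ arcs. That claim is not proved.

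In the branch where the digon graph $D$ contains a $P_{k+1}$, your first bullet already shows that the ends of every $k$-edge path of $D$ are non-adjacent in $S$. That needs no connectivity hypothesis. So the ear decomposition, used ``to show that long digon paths cannot simultaneously have all their $k$-step endpoints adjacent'', addresses a non-issue.

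What you actually need is a counting inequality. Write $N_k(D)$ for the number of unordered pairs joined by a path with exactly $k$ edges in $D$. You need $N_k(D)\ge e(D)-g(m)+1$, or else extra non-adjacent pairs coming from cycles that mix digon arcs with one-way arcs. This is a deficiency version of the Faudree--Schelp theorem. You never state it precisely or prove it; the injective charging and the induction fallback are only named.

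Note that $e(D)-N_k(D)\le g(m)$ for all $C_{k+1}$-free $D$ would give the upper bound directly, with no strong-component reduction at all. That shows this inequality is exactly where the difficulty of the theorem sits.

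The $P_{k+1}$-free branch also needs more than you wrote:
\begin{itemize}
\item Erd\H{o}s--Gallai gives only $\frac{k-1}{2}m$, which exceeds $g(m)$ by $\frac{r(k-r)}{2}$ when $r\neq 0$. You need Faudree--Schelp, $\textnormal{ex}(m,P_{k+1})=g(m)$.
\item The Faudree--Schelp extremal graphs are not only $qK_k\cup K_r$. For $k=3$, $m=4$ and $D=K_{1,3}$, you have $e(D)=3=g(4)$ and $N_3(D)=0$. Digon paths then supply no missing pair at all.
\item So strictness must come from a genuinely directed argument. For example: a strong semicomplete digraph on at least $3$ vertices is pancyclic, hence contains $\overrightarrow{C_{k+1}}$, which forces a non-adjacent pair.
\end{itemize}
Your phrase ``should then force'' needs exactly such a step. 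This is also where $k\ge 3$ must enter. For $k\le 2$ the strict claim is false: the strong digraph $\overleftrightarrow{K_{2,2}}$ attains $f(4)$ for $k=2$, and a strong tournament attains $f(m)$ for $k=1$.
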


We know that the Laplacian energy of digraphs is related to the sum of the squares of the outdegrees, and the sum of the outdegrees is equal to the number of arcs. So it's natural for us to conjecture that the digraph which has the maximum Laplacian energy of $\overrightarrow{C_{k+1}}$-free digraphs is also in $\overrightarrow{\mathcal{F}_{n,k}}$ for $k\geq3$.
In particular, we obtain the following result.

\noindent\begin{theorem}\label{th:exLEC} Let $k,n\in \mathbb{N}^+$, $n=qk+r$, $0\leq r<k$ and $k\geq 3$.
$$\textnormal{ex}_{LE}(n,\overrightarrow{C_{k+1}})
=\frac{1}{3}n^3+\left(\frac{k}{2}-1\right)n^2+\frac{k^2}{6}n+\frac{2}{3}r^3-\frac{k}{2}r^2-\frac{k^2}{6}r,$$
and
$$\textnormal{EX}_{LE}(n,\overrightarrow{C_{k+1}})=
\begin{cases}
\overrightarrow{{F}_{n,k}^{q+1}},& \textnormal{if} \ 0<r<k,\\
\overrightarrow{{F}_{n,k}^0},& \textnormal{if} \ r=0.
\end{cases}$$
\end{theorem}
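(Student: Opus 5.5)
The plan is to work with the formula of Qi et al., $LE(G)=\sum_{i=1}^n(d_i^+)^2+c_2$, where $c_2$ is twice the number of digons (pairs $\{u,v\}$ with both $(u,v)$ and $(v,u)$ present). The proof has two halves: compute $LE$ on $\overrightarrow{\mathcal{F}_{n,k}}$ and identify the best member, then prove the matching upper bound for every $\overrightarrow{C_{k+1}}$-free digraph.

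\textbf{Lower bound.} For $\overrightarrow{F_{n,k}}$ with blocks $V_1,\dots,V_{q'}$, a vertex of $V_i$ has outdegree $|V_i|-1+\sum_{j>i}|V_j|$, and $c_2=\sum_i|V_i|(|V_i|-1)$. Among the members $\overrightarrow{F^{s+1}_{n,k}}$, I would compare the placement of the $\overleftrightarrow{K_r}$ block at position $s+1$ with position $s+2$ by swapping it with the adjacent $\overleftrightarrow{K_k}$ block. Only the outdegrees of these $k+r$ vertices change. Those in the $k$-block gain $k-r$ in outdegree, since $r$ lost is replaced by $k$, while those in the $r$-block lose $k-r$. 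The change in $\sum(d^+)^2$ is therefore positive, because the larger block sits earlier and has larger outdegrees. Hence $s=q$ is optimal. Summing the outdegrees of $\overrightarrow{F^{q+1}_{n,k}}$, and of $\overrightarrow{F^0_{n,k}}$ when $r=0$, gives a routine sum of squares that should produce the stated cubic.

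\textbf{Upper bound.} Let $G$ be $\overrightarrow{C_{k+1}}$-free, and order its strong components $C_1,\dots,C_t$ acyclically so that all arcs between components go forward. Then for $v\in C_i$,
$$d^+(v)\le d^+_{C_i}(v)+\sum_{j>i}|C_j|,$$
and every digon lies inside a component. If every strong component has order at most $k$, then each is dominated by the complete digraph on its vertex set, and $G$ is dominated by a digraph of the same block structure with $C_i\mapsto C_j$ for $i<j$. The problem then reduces to a purely numerical one: maximize $\Phi(m_1,\dots,m_t)$ over compositions of $n$ with parts $m_i\le k$. I would show by exchange arguments that $\Phi$ increases when two consecutive parts $a,b<k$ are merged or balanced toward $(\min(k,a+b),\,\cdot)$, and when a smaller part is moved later, as in the swap computation above. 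This forces the composition $(k,\dots,k,r)$ and also yields uniqueness.

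\textbf{Main obstacle.} The hard case is a strong component $H$ of order $m>k$, where the complete digraph is unavailable. Here I would prove a local lemma: for strong $\overrightarrow{C_{k+1}}$-free $H$ on $m$ vertices, and any set of ``external'' outdegree offsets $x_v\ge 0$ that are constant on $H$ (equal to $\sum_{j>i}|C_j|$),
$$\sum_{v\in H}(d^+_H(v)+x)^2+c_2(H)\ \le\ \Phi\text{-value of the optimal } \overrightarrow{F_{m,k}}\text{-block decomposition of }H\text{ with offset }x.$$
The route I would try first is induction on $m$. Since $H$ is strong and has more than $k$ vertices, I would use Theorem~\ref{th:exC} to control the arc count $\sum_v d_H^+(v)\le \textnormal{ex}(m,\overrightarrow{C_{k+1}})$, and bound the digons by the arc count. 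Strong connectivity with $m>k$ and no $\overrightarrow{C_{k+1}}$ should make $H$ strictly worse than the extremal $\overrightarrow{F_{m,k}}$, which is not strong, both in total arcs and in maximum outdegree; a vertex of outdegree at least $k$ in a long strong digraph tends to close a $(k+1)$-cycle. Combining the arc bound with an outdegree cap such as $d_H^+(v)\le m-2$ and convexity of $x\mapsto x^2$ should give the inequality. I expect this convexity-plus-arc-count step to be the delicate one, especially verifying strictness, which is what yields the uniqueness of the extremal digraphs.
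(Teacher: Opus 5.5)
You have a genuine gap: your local lemma for strong components of order $m>k$ is never proved, and it is essentially the whole theorem. The outer parts of your plan are sound. The block swap changes $\sum(d^+)^2$ by $kr(k-r)>0$, whatever the offset. When every strong component has order at most $k$, the acyclic ordering of components and the monotonicity of $LE$ under adding arcs do reduce the problem to optimizing over compositions of $n$. The difficulty comes from this identity:
$$\sum_{v\in H}(d_H^+(v)+x)^2=\sum_{v\in H}d_H^+(v)^2+2x\,e(H)+mx^2.$$
Combined with $e(H)\le \textnormal{ex}(m,\overrightarrow{C_{k+1}})$ from Theorem~\ref{th:exC}, it shows that the lemma for all $x\ge 0$ is equivalent to its case $x=0$. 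That case is exactly $LE(H)\le LE(\overrightarrow{F^{q+1}_{m,k}})$, i.e.\ the theorem restricted to strong digraphs. So the decomposition moves all the difficulty into the strong case and leaves it open.

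The tools you sketch for that case would not work. The cap $d_H^+(v)\le m-2$ is false. The bidirected star $\overleftrightarrow{K_{1,m-1}}$ is strong and has only $2$-cycles, so it is $\overrightarrow{C_{k+1}}$-free, yet its centre has outdegree $m-1$. Hence strong components are not ``strictly worse in maximum outdegree.''

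More fundamentally, a total bound $e(H)\lesssim m^2/2$ plus any cap at most $m-1$ lets convexity give $\sum d^2$ up to about $(m/2)(m-1)^2\approx m^3/2$. The target is $LE(\overrightarrow{F^{q+1}_{m,k}})\approx m^3/3$, so this is far too weak. Likewise $c_2(H)\le e(H)$ permits $\Theta(m^2)$ digons, against $c_2(\overrightarrow{F_{m,k}})\le (k-1)m$. For even $k$, $\overleftrightarrow{K_{\lfloor m/2\rfloor,\lceil m/2\rceil}}$ is $\overrightarrow{C_{k+1}}$-free with $c_2=\lfloor m^2/2\rfloor$.

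What is missing is control of \emph{every} partial sum of the sorted outdegree sequence, not just the total and the maximum. This is available directly, with no strong-component analysis:

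\begin{itemize}
\item For any $t$-set $S$ in a $\overrightarrow{C_{k+1}}$-free $G$, $\sum_{v\in S}d^+(v)$ equals $e(G[S])$ plus the number of arcs from $S$ to $\mathcal{V}(G)\setminus S$. This is at most $\textnormal{ex}(t,\overrightarrow{C_{k+1}})+t(n-t)$.
\item That bound is exactly the sum of the $t$ largest outdegrees of $\overrightarrow{F^{q+1}_{n,k}}$. Its first $t$ vertices in block order induce a member of $\overrightarrow{\mathcal{F}_{t,k}}$ and send arcs to all remaining vertices.
\item The weak-majorization (Tomi\'c--Weyl) form of Karamata's inequality applies, since $x\mapsto x^2$ is increasing and convex on $[0,\infty)$. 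It gives $M_1(G)\le M_1(\overrightarrow{F^{q+1}_{n,k}})$, with equality only for equal outdegree sequences.
\end{itemize}

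Even with this, the $c_2$ term needs its own argument: you must show that the $M_1$-deficit absorbs any excess of digons. Neither your proposal nor a degree-cap/convexity estimate supplies that.

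For comparison, the paper never decomposes into strong components. When $e(G)=\textnormal{ex}(n,\overrightarrow{C_{k+1}})$ it uses the Zhou--Li characterization together with the ordering in Lemma~\ref{le:ordering}. When $e(G)$ is smaller it argues by arc transformations starting from $\overrightarrow{F^{q+1}_{n,k}}$.
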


If $k=1$ or $k=2$, then $\overrightarrow{C_2}=\overleftrightarrow{K_2}$ and $\overrightarrow{C_3}$ is a tournament. In 1970, Brown and Harary~\cite{BrHa} already obtained the Tur\'{a}n number for complete digraphs and tournaments, see Theorem~\ref{th:exCT}. $\overrightarrow{\mathcal{F}_{n,1}}$ and $\overrightarrow{\mathcal{F}_{n,2}}$ are also among the extremal digraphs of Tur\'{a}n number for $\overrightarrow{C_2}$-free digraphs and $\overrightarrow{C_3}$-free digraphs, respectively. Then, when $k=1$ or $k=2$, how can the $\overrightarrow{C_{k+1}}$-free digraph that has the maximum Laplacian energy be characterized?

If $k=1$, $\textnormal{EX}(n,\overrightarrow{C_2})$ is a tournament obviously. We have the following result.

\noindent\begin{theorem}\label{th:exLEC2} Let $n\in \mathbb{N}^+$. Then
$$\textnormal{ex}_{LE}(n,\overrightarrow{C_2})=\frac{n(n-1)(2n-1)}{6},$$
and $\textnormal{EX}_{LE}(n,\overrightarrow{C_2})$ is a transitive tournament.
\end{theorem}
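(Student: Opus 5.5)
We prove Theorem~\ref{th:exLEC2} directly from the formula $LE(G)=\sum_{i=1}^n(d_i^+)^2+c_2$ of Qi et al.~\cite{QiFL}. A digraph $G$ is $\overrightarrow{C_2}$-free exactly when no pair $u,v$ carries both arcs $(u,v)$ and $(v,u)$. A directed closed walk of length $2$ is precisely such a pair, so $c_2=0$ and
$$LE(G)=\sum_{i=1}^n (d_i^+)^2 .$$
The problem therefore reduces to maximizing the sum of squared outdegrees over orientations of subgraphs of $K_n$.

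\emph{Reduction to tournaments.} Adding an arc between two non-adjacent vertices keeps the digraph $\overrightarrow{C_2}$-free and strictly increases one outdegree, hence strictly increases $LE$. A maximizer must therefore be a tournament. (The connectivity convention is harmless here, since tournaments are connected.)

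\emph{Upper bound.} Let $d_{(1)}\ge d_{(2)}\ge\cdots\ge d_{(n)}$ be the outdegrees sorted in nonincreasing order. For each $j$, the $j$ vertices of largest outdegree have at most $j(n-j)$ arcs leaving the set and exactly $\binom{j}{2}$ arcs inside it. This gives
$$\sum_{i\le j} d_{(i)}\le \binom{j}{2}+j(n-j)=\sum_{i=1}^{j}(n-i).$$
Equality holds at $j=n$, since both sides equal $\binom n2$. So the score vector is majorized by $(n-1,n-2,\dots,1,0)$. Because $x\mapsto x^2$ is strictly convex, Karamata's inequality (or a direct Abel-summation argument) gives
$$\sum (d_i^+)^2\le \sum_{m=0}^{n-1}m^2=\frac{n(n-1)(2n-1)}{6}.$$
The transitive tournament attains this value, since its score sequence is exactly $(n-1,\dots,0)$.

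\emph{Characterizing equality.} This is the step needing the most care. Strict convexity forces equality in majorization, so the score sequence must be exactly $(n-1,n-2,\dots,0)$. A standard induction then shows such a tournament is transitive. The vertex of score $n-1$ beats everyone else. Deleting it leaves a tournament on $n-1$ vertices with scores $(n-2,\dots,0)$, which is transitive by induction. Adding back a source to a transitive tournament keeps it transitive. Hence the extremal digraph is the transitive tournament, unique up to isomorphism.
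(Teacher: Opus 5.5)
Your proof is correct and follows the same route as the paper: reduce to tournaments by adding arcs, show the score sequence is majorized by $(n-1,n-2,\dots,0)$, and apply Karamata's inequality. You also supply three things the paper only uses or asserts without justification: the counting argument $\sum_{i\le j}d_{(i)}\le\binom{j}{2}+j(n-j)$ behind the majorization, the induction showing that the score sequence $(n-1,\dots,0)$ forces transitivity, and the observation $c_2=0$ that gives $LE=M_1$ here.
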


If $k=2$, Brown and Harary~\cite{BrHa} obtained $\textnormal{EX}(n,\overrightarrow{C_3})=\overrightarrow{\mathcal{BK}_{n,p}}$. The definition of $\overrightarrow{\mathcal{BK}_{n,p}}$ is as follows:

For integers $n, n_1,n_2,\ldots,n_p\in \mathbb{N}^+$, $\sum^p_{i=1} n_i=n$, we define a class of digraphs on $n$ vertices, denoted by $\overrightarrow{BK_{n,p}}$, which satisfies the following properties:
\begin{itemize}
  \item [(i)] $\mathcal{V}(\overrightarrow{BK_{n,p}})$ has a partition $\{V_1, V_2, \ldots, V_{p}\}$ such that $|V_i|=n_i$ is an even for all but at most one $V_i$;
  \item [(ii)] $\overrightarrow{BK_{n,p}}[V_i]$ is a balanced complete bipartite digraph for $1\le i\le p$;
  \item [(iii)] for any two parts $V_i$ and $V_j$ with $1\leq i<j\leq p$, $V_i \mapsto V_j$.
\end{itemize}

Note that there may be more than one nonisomorphic $\overrightarrow{BK_{n,p}}$ for given $n$ and unspecified $p$. However all $\overrightarrow{BK_{n,p}}$ have the same size. We denote by $\overrightarrow{\mathcal{BK}_{n,p}}$ the set of all $\overrightarrow{BK_{n,p}}$. Obviously, $\overrightarrow{\mathcal{F}_{n,2}}\subseteq \overrightarrow{\mathcal{BK}_{n,p}}$.

In particular, if $|V_i|=4$ or $2$ for all $i\in[1,p]$, we denote by $\overrightarrow{BK^0_{n,p}}$; if $|V_i|=4$ or $2$ for $i\in[1,p-1]$ and $|V_p|=3$ or $1$, we denote by $\overrightarrow{BK^1_{n,p}}$. Also, let $\overrightarrow{\mathcal{BK}^0_{n,p}}$ ($\overrightarrow{\mathcal{BK}^1_{n,p}}$)be the set of all $\overrightarrow{BK^0_{n,p}}$ ($\overrightarrow{BK^1_{n,p}}$). And $\overrightarrow{{F}_{n,2}^0}\in\overrightarrow{\mathcal{BK}^0_{n,p}}$, $\overrightarrow{{F}_{n,2}^{\frac{n+1}{2}}}\in \overrightarrow{\mathcal{BK}^1_{n,p}}$.

Hence, if $k=2$, we have the following result.

\noindent\begin{theorem}\label{th:exLEC3} Let $n\in \mathbb{N}^+$, $n=2q+r$, $0\leq r<2$. Then
$$\textnormal{ex}_{LE}(n,\overrightarrow{C_3})=\frac{2q}{3}(3n^2-6qn+4q^2+2),$$
and
$$\textnormal{EX}_{LE}(n,\overrightarrow{C_3})=
\begin{cases}
\overrightarrow{\mathcal{BK}^1_{n,p}} ,& \textnormal{if} \ r=1,\\
\overrightarrow{\mathcal{BK}^0_{n,p}},& \textnormal{if} \ r=0.
\end{cases}$$
\end{theorem}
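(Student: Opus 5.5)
The plan is to decompose along strong components and reduce everything to a bound for strongly connected $\overrightarrow{C_3}$-free digraphs. Two facts come first. By the formula of Qi et al., $LE(G)=\sum_{v}(d^+_G(v))^2+c_2$, and $c_2=\mathrm{tr}(A^2)$ is twice the number of digons. Let $G$ be a $\overrightarrow{C_3}$-free digraph on $n$ vertices. Its strong components $V_1,\dots,V_p$, with $|V_i|=m_i$, can be ordered topologically, so every arc between two components goes from $V_i$ to $V_j$ with $i<j$. Adding all arcs $V_i\to V_j$ for $i<j$ creates no directed cycle through two components. It therefore keeps $G$ $\overrightarrow{C_3}$-free and does not decrease $LE$. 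Hence we may assume $V_i\mapsto V_j$ for all $i<j$.

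Put $t_i=\sum_{j>i}m_j$. Every vertex of $V_i$ then has outdegree $d_i(v)+t_i$, where $d_i$ is the outdegree inside $G[V_i]$, and all digons lie inside components. Writing $e_i$ for the number of arcs of $G[V_i]$ and $c_2^{(i)}$ for its digon walks, this gives
$$LE(G)=\sum_{i=1}^p\Big(\sum_{v\in V_i}d_i(v)^2+c_2^{(i)}+2t_ie_i+m_it_i^2\Big).$$
For the linear term we use Theorem~\ref{th:exCT}: $e_i\le \mathrm{ex}(m_i,\overrightarrow{C_3})=\lfloor m_i^2/2\rfloor$. The key lemma, and the step I expect to be the main obstacle, concerns a strongly connected $\overrightarrow{C_3}$-free digraph $H$ on $m$ vertices. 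It claims that $\sum_v d^+_H(v)^2+c_2(H)$ is at most $\Phi(m)$, the value attained by the balanced complete bipartite digraph, namely $m\lfloor m/2\rfloor\lceil m/2\rceil/m\cdot(\ldots)$ computed directly: $\lfloor m/2\rfloor\lceil m/2\rceil(m+2)$ up to the parity correction. The lemma should also say that equality forces $H$ to be that bipartite digraph.

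To prove the lemma I would start from the local structure at a vertex $v$. There is no arc from $N^+(v)$ to $N^-(v)$, since such an arc would close a $\overrightarrow{C_3}$ through $v$. In particular the digon-neighbourhood $N^+(v)\cap N^-(v)$ is independent. Summing over arcs $(u,w)$, I would try to show that $d^+(u)+d^-(w)$ is controlled: the out-neighbours of $w$ cannot be in-neighbours of $u$. This should give a bound of the form $\sum_v d^+(v)^2+\#\{\text{digons}\}\cdot 2\le$ (arc count)$\cdot m/2+\ldots$. If that double counting is too lossy, the fallback is induction on $m$. One deletes a vertex of minimum outdegree (or a digon pair) and uses strong connectivity to bound how many arcs and digons the deleted vertex can carry.

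With the lemma in hand, $LE(G)\le \sum_i\big(\Phi(m_i)+2t_i\lfloor m_i^2/2\rfloor+m_it_i^2\big)$, and this is a purely numerical optimisation over compositions $(m_1,\dots,m_p)$ of $n$. Replacing one part of size $m\ge 5$ by two consecutive parts $m',m''$ with $m'+m''=m$ should strictly increase the bound. The comparison is that $\Phi$ is roughly $m^3/4$, while splitting gains roughly $m'm''(\cdot)$ from the new cross arcs counted with weight about $m$; this is a cubic inequality to check. Parts of size $2$ and $4$ give equal value, as the check $4\cdot(3^2+3^2+1+1)+\ldots$ at $n=4$ (both $24$) suggests. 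An odd part is worse unless it is unique and placed last, since moving it to the end is an exchange argument on $t_i$. Finally, summing with all parts of size $2$ (plus a final part of size $1$ when $r=1$) gives $\frac{2q}{3}(3n^2-6qn+4q^2+2)$. Tracing the equality cases through the lemma and the splitting step identifies exactly $\overrightarrow{\mathcal{BK}^0_{n,p}}$ and $\overrightarrow{\mathcal{BK}^1_{n,p}}$.
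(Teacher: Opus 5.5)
Your reduction is sound up to the decomposition formula. Adding all arcs $V_i\to V_j$ ($i<j$) keeps $G$ $\overrightarrow{C_3}$-free, all digons lie inside strong components, and
\[
LE(G)=\sum_i\Big(\sum_{v\in V_i}d_i(v)^2+c_2^{(i)}+2t_ie_i+m_it_i^2\Big).
\]
The argument then breaks at your key lemma, which is false. (Incidentally, $\Phi(m)=\lfloor m/2\rfloor\lceil m/2\rceil(m+2)$ exactly.) A strongly connected $\overrightarrow{C_3}$-free digraph can carry almost the whole transitive digon chain of $\overrightarrow{F_{n,2}}$. Here is a counterexample:
\begin{itemize}
\item for $k\ge2$, take vertices $x_1,y_1,\dots,x_k,y_k,w_1,w_2$, so $m=2k+2$;
\item add the digons $x_i\leftrightarrow y_i$ and all four arcs from $\{x_i,y_i\}$ to $\{x_j,y_j\}$ for $i<j$;
\item add the return path $x_k\to w_1\to w_2\to x_1$.
\end{itemize}
This digraph is strongly connected. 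It is $\overrightarrow{C_3}$-free because arcs of the chain never decrease the block index, $w_1,w_2$ have in- and outdegree $1$, and $x_1\neq x_k$. Its outdegrees are $2(k-i)+1$ on block $i<k$, and $2,1,1,1$ on $x_k,y_k,w_1,w_2$. Hence
\[
\sum_v d^+(v)^2+c_2=\frac{2k(4k^2-1)}{3}+2k+5\sim\frac{m^3}{3},\qquad \Phi(m)=(k+1)^2(2k+4)\sim\frac{m^3}{4}.
\]
For $k=20$ ($m=42$) this gives $21365>19404$. So $LE(G)\le\sum_i\big(\Phi(m_i)+2t_i\lfloor m_i^2/2\rfloor+m_it_i^2\big)$ is not a valid bound, and the optimisation over compositions proves nothing.

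What your decomposition can legitimately give is an induction on $n$, in the case where $G$ has at least two strong components:
\begin{itemize}
\item bound $\sum_{v\in V_i}d_i(v)^2+c_2^{(i)}$ by $\mathrm{ex}_{LE}(m_i,\overrightarrow{C_3})$ (the induction hypothesis) and $e_i$ by $\lfloor m_i^2/2\rfloor$;
\item both bounds are attained simultaneously by $\overrightarrow{F_{m_i,2}}$ with its singleton last;
\item so the total bound becomes the $LE$ of a transitive chain of digons and singletons, which is maximised with at most one singleton, placed last.
\end{itemize}
This leaves untouched the case where $G$ itself is strongly connected, and that is where the real content lies. You must show that a strongly connected $\overrightarrow{C_3}$-free digraph on $n\ge5$ vertices has $LE(G)$ strictly below $\frac{2q}{3}(3n^2-6qn+4q^2+2)$. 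The example above falls short of this value by only $8k^2+8k-1=O(n^2)$, so no comparison with the bipartite digraph can do it. You need an argument sensitive to lower-order terms. The double counting and minimum-degree deletion you sketch are aimed at the false $\Phi$ bound, so they do not supply this missing step. (For context, the paper never decomposes into strong components. It perturbs $\overrightarrow{F_{n,2}}$ by arc swaps, compares outdegree sequences via Karamata's inequality, and tracks $c_2$ separately.)
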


\medskip\medskip
We also used an extremely important tool: Karamata's inequality.

\medskip\noindent\textbf{Karamata's inequality:}

Let $I$ be an interval of the real line and let $f$ denote a real-valued, convex function defined on $I$. If $x_1,x_2,\ldots,x_n$ and $y_1,y_2,\ldots,y_n$ are numbers in $I$ such that $(x_1,x_2,\ldots,x_n)$ majorizes $(y_1,y_2,\ldots,y_n)$, then
$$f(x_1)+f(x_2)+\cdots+f(x_n)\geq f(y_1)+f(y_2)+\cdots+f(y_n).$$
If $f$ is a strictly convex function, then the inequality holds with equality if and only if we have $x_i=y_i$ for all $i=1,2,\ldots,n$.

Here majorization means that $x_1,x_2,\ldots,x_n$ and $y_1,y_2,\ldots,y_n$ satisfies
$$x_1\geq x_2\geq \cdots \geq x_n\ \textnormal{and}\ y_1\geq y_2\geq \cdots \geq y_n,$$
and we have the inequalities
$$x_1+x_2+\cdots+x_i\geq y_1+y_2+\cdots+y_i,$$
for all $i=1,2,\ldots,n-1$, and the equality
$$x_1+x_2+\cdots+x_n=y_1+y_2+\cdots+y_n.$$

\medskip

The rest of the paper is organized as follows. In Section~\ref{sec2}, we present the proofs of Theorems~\ref{th:exLEC2} and~\ref{th:exLEC3}. In Section~\ref{sec3}, we give the proof of Theorem~\ref{th:exLEC}. We finish the paper with some problems in Section~\ref{sec4}.

{\section{Extremal Laplacian energy of $\protect\overrightarrow{C_{k+1}}$-free digraphs when $k=1,2$}\label{sec2}}

In this section, we give the proofs of Theorems~\ref{th:exLEC2} and~\ref{th:exLEC3}. First, we consider the extremal Laplacian energy of $\overrightarrow{C_2}$-free digraphs.

\medskip
\noindent\textbf{Proof of Theorem~\ref{th:exLEC2}:}
Let $G$ be a $\overrightarrow{C_2}$-free digraph having the maximum Laplacian energy. From Theorems~\ref{th:exK} and \ref{th:exCT}, we has known
$$\textnormal{ex}(n,\overrightarrow{C_2})=\binom{n}{2}+\textnormal{ex}(n,K_2)=\binom{n}{2}+e(T_{n,1})=\frac{n(n-1)}{2}.$$
Actually, if $G$ is a $\overrightarrow{C_2}$-free digraph, tournament has a maximum number of arcs.

If $e(G)<\frac{n(n-1)}{2}$, there must exist the vertices $u,v$, such that $(u,v)\notin \mathcal{A}(G)$ and $(v,u)\notin \mathcal{A}(G)$. Let $G'=G+(u,v)$ or $G'=G+(v,u)$. Then $G'$ is also a $\overrightarrow{C_2}$-free digraph and $LE(G')\geq LE(G)$. So $G$ is a tournament.

We know that the outdegree descending sequence of transitive tournament is $\{n-1,n-2,\ldots,1,0\}$. If we change the directions of some arcs in the transitive tournament, the resulting graph will always be a tournament. Suppose the outdegree sequence of vertices of any tournament satisfies that $d_1^+\geq d_2^+\geq\cdots\geq d_n^+$. Then we can get
$$\sum_{i=1}^j(n-i)\geq \sum_{i=1}^j d_i^+.$$
So by Karamata's inequality,
$$\sum_{i=1}^j(n-i)^2\geq \sum_{i=1}^j (d_i^+)^2.$$
That is, transitive tournament has the maximum Laplacian energy in tournament.

Therefore, $\textnormal{ex}_{LE}(n,\overrightarrow{C_2})=\frac{n(n-1)(2n-1)}{6}$, and $\textnormal{EX}_{LE}(n,\overrightarrow{C_2})$ is a transitive tournament. This completes the proof of Theorem~\ref{th:exLEC2}.\qed

%$\hfill\square$

\medskip

For the Laplacian energy $LE(G)=\sum_{i=1}^n(d^+_i)^2+c_2$, let $M_1(G)=\sum_{i=1}^n(d^+_i)^2$ be the First Zagreb Index of $G$.
For any given graphs (or digraphs) $H$ and $n\in \mathbb{N}^+$, let $\textnormal{ex}_{M_1}(n,H)$ denote the maximum First Zagreb Index of $H$-free digraphs and $\textnormal{EX}_{M_1}(n,H)$ denote the family of all digraphs which have the maximum First Zagreb Index of $H$-free digraphs.
Before considering the extremal Laplacian energy of $\overrightarrow{C_3}$-free digraphs, we consider the extremal First Zagreb Index of $\overrightarrow{C_3}$-free digraphs firstly.

\noindent\begin{lemma}\label{le:exMC3} Let $n\in \mathbb{N}^+$, $n=2q+r$, $0\leq r<2$. Then
$$\textnormal{ex}_{M_1}(n,\overrightarrow{C_3})=\frac{2q}{3}(3n^2-6qn+4q^2-1),$$
and
$$\textnormal{EX}_{M_1}(n,\overrightarrow{C_3})=
\begin{cases}
\overrightarrow{{F}_{n,2}^{\frac{n+1}{2}}},& \textnormal{if} \ r=1,\\
\overrightarrow{{F}_{n,2}^0},& \textnormal{if} \ r=0.
\end{cases}$$
\end{lemma}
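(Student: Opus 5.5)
The plan is to prove the bound by a majorization argument on outdegree sequences, and then read off the extremal structure from the equality case. Let $G$ be a $\overrightarrow{C_3}$-free digraph on $n=2q+r$ vertices, with outdegrees sorted as $d_1\ge d_2\ge\cdots\ge d_n$. Define the target sequence $(x_i)$ as the sorted outdegree sequence of the claimed extremal digraph:
\begin{itemize}
  \item[(a)] for $r=0$, take $\overrightarrow{F_{n,2}^0}$, whose sequence is $(2q-1,2q-1,2q-3,2q-3,\ldots,1,1)$;
  \item[(b)] for $r=1$, take $\overrightarrow{F_{n,2}^{\frac{n+1}{2}}}$, with the singleton block last, whose sequence is $(2q,2q,2q-2,2q-2,\ldots,2,2,0)$.
\end{itemize}
Summing $x_i^2$ gives $\frac{2q}{3}(4q^2-1)$ for $r=0$, and the analogous expression for $r=1$. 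Both agree with $\frac{2q}{3}(3n^2-6qn+4q^2-1)$, which settles the value once we show $\sum d_i^2\le\sum x_i^2$.

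\textbf{Prefix bound.} Fix $j$ and let $S$ be the set of the $j$ vertices of largest outdegree. Then $\sum_{i\le j}d_i$ counts the arcs inside $S$ plus the arcs from $S$ to $\mathcal{V}(G)\setminus S$. The digraph $G[S]$ is $\overrightarrow{C_3}$-free, so by Theorem~\ref{th:exC} with $k=2$ (equivalently Theorem~\ref{th:exCT}) it has at most $\lfloor j^2/2\rfloor$ arcs. Hence
$$\sum_{i=1}^j d_i\le \left\lfloor \tfrac{j^2}{2}\right\rfloor + j(n-j).$$
A direct computation shows this bound equals $\sum_{i\le j}x_i$ for every $j$. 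For example, with $r=0$ and $j=2m$ both sides equal $4mq-2m^2$, and with $j=2m+1$ both sides equal $4mq-2m^2+2q-2m-1$; the case $r=1$ is checked the same way. So $(x_i)$ weakly majorizes $(d_i)$.

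\textbf{Applying Karamata.} The total sums may differ, since $G$ can have fewer arcs than the maximum, so Karamata's inequality does not apply verbatim. I would handle this in one of two ways. One option is to quote the weak-majorization form, valid for increasing convex $f(t)=t^2$ on $[0,\infty)$. The other is to first reduce to the case where $G$ has $\textnormal{ex}(n,\overrightarrow{C_3})$ arcs, by lowering the smallest $d_i$ until the sums match; this only decreases $\sum d_i^2$ and keeps the majorization. Either way, $\sum d_i^2\le\sum x_i^2$.

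\textbf{Equality and uniqueness.} If equality holds, strict convexity forces $d_i=x_i$ for all $i$. In particular every prefix bound above is tight. I expect this step to be the main obstacle: getting from the outdegree sequence to the exact digraph. The plan is induction along the prefixes $j=2,4,6,\ldots$.
\begin{itemize}
  \item[(1)] Tightness at $j$ gives two facts: $S_j$ sends all $j(n-j)$ possible arcs to its complement, and $G[S_j]$ attains $\lfloor j^2/2\rfloor$ arcs.
  \item[(2)] Compare consecutive even prefixes $S_{2m-2}\subset S_{2m}$. The two new vertices have outdegree exactly $2(q-m)+1$ (when $r=0$). Because $S_{2m-2}$ already sends every arc into its complement, the new pair receives arcs from all of $S_{2m-2}$ and sends all arcs to the rest.
  \item[(3)] Counting arcs inside the pair then forces it to be a $\overleftrightarrow{K_2}$. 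Any arc from the pair back into $S_{2m-2}$ would close a $\overrightarrow{C_3}$ (or exceed the outdegree count), so no such arcs exist, which gives $V_i\mapsto V_j$ for $i<j$.
\end{itemize}
For $r=1$, the same argument works, with the lone vertex of outdegree $0$ forced to form the last block $\overleftrightarrow{K_1}$. This identifies $G$ as $\overrightarrow{F_{n,2}^0}$ or $\overrightarrow{F_{n,2}^{\frac{n+1}{2}}}$ respectively.

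Care is needed in (3): the outdegree information alone must rule out arcs back into earlier blocks, and ties among equal outdegrees mean the choice of $S_j$ is not unique. I would resolve this by noting that the prefix bound holds for every $j$-set, so tightness holds for whichever tied choice is made.
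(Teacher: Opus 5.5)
Your proof is correct, and it takes a genuinely different route from the paper's.

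\textbf{How the two arguments differ.} The paper works locally around the conjectured extremal digraph $F^{\frac{n+1}{2}}$. It considers a single switch (delete an arc with tail $v_{i_1}^{j_1}$, add one with tail $v_{i_2}^{j_2}$) and shows by block-index casework that either $i_1<i_2$, so the switch is a Robin Hood transfer and $M_1$ drops by Karamata, or $i_1\ge i_2$ and a $\overrightarrow{C_3}$ appears. It then appeals to an informal ``loop'' of repeated switches. The case $e(G^\ast)<\textnormal{ex}(n,\overrightarrow{C_3})$ is handled by deleting arcs at the low-outdegree end of $F^{\frac{n+1}{2}}$ and repeating.

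You instead prove, for an arbitrary $\overrightarrow{C_3}$-free $G$, the global bound $\sum_{i\le j}d_i\le\lfloor j^2/2\rfloor+j(n-j)$, by applying the Tur\'an number to the set of the $j$ largest outdegrees. You then observe that the extremal sequence meets this bound with equality for every $j$. This gives you several things:
\begin{itemize}
\item It controls every $\overrightarrow{C_3}$-free digraph at once. The paper's switching loop never actually shows that digraphs far from $F^{\frac{n+1}{2}}$, or all digraphs with fewer arcs, are covered.
\item It disposes of the fewer-arcs case in one stroke via weak majorization.
\item It needs only the value $\textnormal{ex}(j,\overrightarrow{C_3})=\lfloor j^2/2\rfloor$, not the structure of $\textnormal{EX}(n,\overrightarrow{C_3})$.
\item It yields uniqueness directly from tightness.
\end{itemize}
The paper's local picture mainly offers intuition for why digon blocks, and the singleton in last position, are forced.

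\textbf{Two small repairs.}

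First, in your second option, lowering the $d_i$ cannot close the gap, since $\sum d_i\le\sum x_i$. You presumably mean lowering the smallest $x_i$: set trailing entries of $x$ to $0$ and shave the last positive one. This keeps $x$ sorted, preserves every prefix inequality (for $j$ past the cut, the prefix of $x$ equals $\sum d_i$), and only decreases $\sum x_i^2$. Alternatively, quote the Tomi\'{c}--Weyl form directly. Its equality case uses that $t^2$ is strictly increasing on $[0,\infty)$ as well as strictly convex.

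Second, in step (3), counting arcs inside $G[S_{2m}]$ cannot distinguish ``the pair is $\overleftrightarrow{K_2}$'' from ``each pair vertex sends one arc back.'' That count is determined by the fixed outdegrees. It is cleaner to use tightness at every $j$, odd $j$ included:
\begin{itemize}
\item Tightness gives $v_a\to v_b$ for all $a<b$ in a sorted order.
\item The outdegree counts then show that each $v_{2i-1}$, and the last vertex when $r=1$, has no backward arc.
\item Each $v_{2i}$ has exactly one backward arc. If it went to some $v_b$ with $b<2i-1$, then $v_b\to v_{2i-1}\to v_{2i}\to v_b$ would be a $\overrightarrow{C_3}$.
\end{itemize}
This pins $G$ down exactly and also disposes of the tie issue you flagged.
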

\begin{proof}
For any digraph $G$, let $M_1(G)=\sum_{i=1}^n(d^+_i)^2$ be the First Zagreb Index of $G$ and $Sd_t(G)$ be the sum of the $t$ largest outdegree of $G$. Assume that $G^\ast$ is a $\overrightarrow{C_3}$-free digraph having the maximum First Zagreb Index. We consider two cases: (i) $e(G^\ast)=\textnormal{ex}(n,\overrightarrow{C_3})$; (ii) $e(G^\ast)<\textnormal{ex}(n,\overrightarrow{C_3})$.

\medskip\medskip
\noindent(i) $e(G^\ast)=\textnormal{ex}(n,\overrightarrow{C_3})$.

From Theorems~\ref{th:exK} and \ref{th:exCT}, we have
$$\textnormal{ex}(n,\overrightarrow{C_3})=\textnormal{ex}(n,\overrightarrow{T_3})=2\textnormal{ex}(n,K_3)=\frac{1}{2}n^2-\frac{r(2-r)}{2},$$
where $r=0$ or $1$. That is,
$$\textnormal{ex}(n,\overrightarrow{C_3})=
\begin{cases}
\frac{n^2}{2}-\frac{1}{2},& \textnormal{if} \ r=1,\\
\frac{n^2}{2},& \textnormal{if} \ r=0.
\end{cases}$$
And
$$\textnormal{EX}(n,\overrightarrow{C_3})=\overrightarrow{\mathcal{BK}_{n,p}}.$$
But $\{\overrightarrow{{F}_{n,2}^{\frac{n+1}{2}}}, \ \overrightarrow{{F}_{n,2}^0}\}\subseteq\overrightarrow{\mathcal{F}_{n,2}}\subseteq\overrightarrow{\mathcal{BK}_{n,p}}$ is just one of the extremal digraph of Tur\'{a}n number for $\overrightarrow{C_3}$-free digraphs.

We consider $r=1$, thus, $q=\frac{n-1}{2}$. We start with the following claim.

\noindent\begin{claim}\label{cl:ch-1}
$\overrightarrow{{F}_{n,2}^{\frac{n+1}{2}}}$ is a $\overrightarrow{C_3}$-free digraph having the maximum First Zagreb Index.
\end{claim}

Let the vertex set of $\overrightarrow{{F}_{n,2}^{\frac{n+1}{2}}}$ is $\mathcal{V}(\overrightarrow{{F}_{n,2}^{\frac{n+1}{2}}})=\{v_1^1,v_1^2,v_2^1,v_2^2,\ldots,v_{\frac{n-1}{2}}^1,v_{\frac{n-1}{2}}^2,v_{\frac{n+1}{2}}^1\}$. For convenience, we denote $\overrightarrow{{F}_{n,2}^{\frac{n+1}{2}}}=F^{\frac{n+1}{2}}$. Then the outdegree descending sequence of $F^{\frac{n+1}{2}}$ is
$$\{n-1,n-1,n-3,n-3,n-5,n-5,\ldots,4,4,2,2,0\},$$
and
$$\begin{cases}
d_{F^{\frac{n+1}{2}}}^+(v_i^1)=d_{F^{\frac{n+1}{2}}}^+(v_i^2),\\
d_{F^{\frac{n+1}{2}}}^+(v_i^1)=d_{F^{\frac{n+1}{2}}}^+(v_{i+1}^1)+2,\\
d_{F^{\frac{n+1}{2}}}^+(v_1^1)=d_{F^{\frac{n+1}{2}}}^+(v_1^2)=n-1,\\
d_{F^{\frac{n+1}{2}}}^+(v_{\frac{n+1}{2}}^1)=0,
\end{cases}$$
where $i=1,2,\ldots,\frac{n-1}{2}$.

By Karamata's inequality, for any $\overrightarrow{C_3}$-free digraph $G$, if
$$Sd_t(F^{\frac{n+1}{2}}) \geq Sd_t(G)$$
for all $t=1,2,\ldots,n-1$ and $Sd_n(F^{\frac{n+1}{2}})=Sd_n(G)$, then
$$M_1(F^{\frac{n+1}{2}}) \geq M_1(G),$$
the inequality holds with equality if and only if $G=F^{\frac{n+1}{2}}$.
That is to say, if there exists a $\overrightarrow{C_3}$-free digraph $G$, such that the outdegree descending sequence of $F^{\frac{n+1}{2}}$ and  $G$ do not satisfy the majorization, then we might have $M_1(F^{\frac{n+1}{2}})<M_1(G)$. Next, we will prove the non-existence of such a $\overrightarrow{C_3}$-free digraph $G$ by performing arc transformations on $F^{\frac{n+1}{2}}$.

Perform the following operations on the arcs in $F^{\frac{n+1}{2}}$:
\begin{center}
\textbf{delete an arc with a tail of $v_{i_1}^{j_1}$ and add an arc with a tail of $v_{i_2}^{j_2}$},
\end{center}
where $v_{i_1}^{j_1}\neq v_{i_2}^{j_2}$.
Let the new digraph obtained through the above operations be $G_1$. Then
$$d_{G_1}^+(v_{i_1}^{j_1})=d_{F^{\frac{n+1}{2}}}^+(v_{i_1}^{j_1})-1 \ \textnormal{and}\ d_{G_1}^+(v_{i_2}^{j_2})=d_{F^{\frac{n+1}{2}}}^+(v_{i_2}^{j_2})+1.$$
At this point, there might be $M_1(F^{\frac{n+1}{2}})< M_1(G_1)$ or $G_1$ is not a $\overrightarrow{C_3}$-free digraph.
We denote the $\overrightarrow{C_3}$ by $u\rightarrow v\rightarrow w\rightarrow u$.
Next, we prove $M_1(F^{\frac{n+1}{2}})>M_1(G_1)$ if $G_1$ is a $\overrightarrow{C_3}$-free digraph. We consider it by the following two cases.

\medskip\noindent
\textbf{Case 1.} $i_1<i_2$.

If $i_1<i_2$, then $d_{F^{\frac{n+1}{2}}}^+(v_{i_1}^{j_1})>d_{F^{\frac{n+1}{2}}}^+(v_{i_2}^{j_2})$. The outdegree descending sequence of $F^{\frac{n+1}{2}}$ and $G_1$ satisfy the majorization. Regardless of whether $G_1$ is a $\overrightarrow{C_3}$-free digraph, we always have $M_1(F^{\frac{n+1}{2}})>M_1(G_1)$.

\medskip\noindent
\textbf{Case 2.} $i_1 \geq i_2$.

If $i_1 \geq i_2$, then $d_{F^{\frac{n+1}{2}}}^+(v_{i_1}^{j_1}) \leq d_{F^{\frac{n+1}{2}}}^+(v_{i_2}^{j_2})$. The outdegree descending sequence of $F^{\frac{n+1}{2}}$ and $G_1$ do not satisfy the majorization. Then there might exist a $G_1$ such that $M_1(F^{\frac{n+1}{2}})<M_1(G_1)$. We consider two cases: $i_1>i_2$ and $i_1=i_2$.

\medskip\noindent
\textbf{Case 2.1.} $i_1>i_2$.

If $i_1>i_2$, without loss of generality, let the new arc be $(v_{i_2}^{j_2},v_{i_2'}^{j_2'})$. Because $v_{i_2}^1$ is equivalent to $v_{i_2}^2$, we set $j_2=1$ and $j_2'=1$. From the structure of $F^{\frac{n+1}{2}}$, we know $i_2'<i_2$. Then we can find two $\overrightarrow{C_3}$ in $G_1$ (see Figure~\ref{fi:Case2.1}):
$$v_{i_2}^1 \rightarrow v_{i_2'}^1 \rightarrow v_{i_2'}^2 \rightarrow v_{i_2}^1\ \textnormal{or}\
v_{i_2}^1 \rightarrow v_{i_2'}^1 \rightarrow v_{i_2}^2 \rightarrow v_{i_2}^1.$$

\begin{figure}[htbp]
\centering
\includegraphics[scale=1.2]{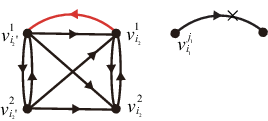}
\caption{The two $\protect\overrightarrow{C_3}$ in $G_1$ of Case 2.1}\label{fi:Case2.1}
\end{figure}

\medskip\noindent
\textbf{Case 2.2.} $i_1=i_2$.

If $i_1=i_2$, without loss of generality, let $v_{i_1}^{j_1}=v_{i_2}^2$, $v_{i_2}^{j_2}=v_{i_2}^1$ and delete the arc $(v_{i_2}^2,v_{i_1'}^{j_1'})$, add the arc $(v_{i_2}^1,v_{i_2'}^1)$. From the structure of $F^{\frac{n+1}{2}}$, we know $i_2'<i_2=i_1\leq i_1'$.

\medskip\noindent
\textbf{Case 2.2.1.} If $i_2=i_1<i_1'$, then we delete the arc $(v_{i_2}^2,v_{i_1'}^{j_1'})$ and add the arc $(v_{i_2}^1,v_{i_2'}^1)$. We also can find two $\overrightarrow{C_3}$ in $G_1$ (see Figure~\ref{fi:Case2.1.1}):
$$v_{i_2}^1 \rightarrow v_{i_2'}^1 \rightarrow v_{i_2'}^2 \rightarrow v_{i_2}^1\ \textnormal{or}\
v_{i_2}^1 \rightarrow v_{i_2'}^1 \rightarrow v_{i_2}^2 \rightarrow v_{i_2}^1.$$

\begin{figure}[htbp]
\centering
\includegraphics[scale=1.2]{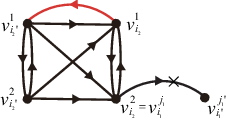}
\caption{The two $\protect\overrightarrow{C_3}$ in $G_1$ of Case 2.1.1}\label{fi:Case2.1.1}
\end{figure}

\medskip\noindent
\textbf{Case 2.2.2.} If $i_2=i_1=i_1'$, then $v_{i_1'}^{j_1'}=v_{i_2}^1$. That is, the deleted arc is $(v_{i_2}^2,v_{i_2}^1)$. We also can find $\overrightarrow{C_3}$ in $G_1$ (see Figure~\ref{fi:Case2.1.2}):
$$v_{i_2}^1 \rightarrow v_{i_2'}^1 \rightarrow v_{i_2'}^2 \rightarrow v_{i_2}^1.$$

\begin{figure}[htbp]
\centering
\includegraphics[scale=1.2]{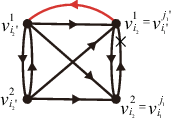}
\caption{The $\protect\overrightarrow{C_3}$ in $G_1$ of Case 2.1.2}\label{fi:Case2.1.2}
\end{figure}

From Case 2, if $i_1 \geq i_2$, we will all find a $\overrightarrow{C_3}$ in $G_1$: $v_{i_2}^1 \rightarrow v_{i_2'}^1 \rightarrow v_{i_2'}^2 \rightarrow v_{i_2}^1$.
So by Cases 1-2, we get $M_1(F^{\frac{n+1}{2}})>M_1(G_1)$ if $G_1$ is a $\overrightarrow{C_3}$-free digraph.

\medskip\medskip
We know that $G_1$ is obtained from $F^{\frac{n+1}{2}}$ by changing one arc. Next, we study the situation where multiple arcs are changed in $F^{\frac{n+1}{2}}$.

We will perform arc transformations based on $G_1$. From Case 1, we have $M_1(F^{\frac{n+1}{2}})>M_1(G_1)$; from Case 2, we might have $M_1(F^{\frac{n+1}{2}})<M_1(G_1)$ but $G_1$ is not a $\overrightarrow{C_3}$-free digraph. In order to find a $\overrightarrow{C_3}$-free digraph $G$ such that $M_1(F^{\frac{n+1}{2}})<M_1(G)$, we need to make $G_1$ in Case 2 has not $\overrightarrow{C_3}$ through arc transformations. We obtain the new digraph $G_2$ by deleting arcs and adding arcs in $G_1$. We take Case 2.1 as an example for analysis. The other cases are similar.

In Case 2.1, we know that $G_1$ has $\overrightarrow{C_3}$: $v_{i_2}^1 \rightarrow v_{i_2'}^1 \rightarrow v_{i_2'}^2 \rightarrow v_{i_2}^1$ or $v_{i_2}^1 \rightarrow v_{i_2'}^1 \rightarrow v_{i_2}^2 \rightarrow v_{i_2}^1$. If we delete the arc $(v_{i_2}^1,v_{i_2'}^1)$, then we also need add an arc and the cases similar to Cases 1-2. So we delete either $(v_{i_2'}^1, v_{i_2'}^2)$ or $(v_{i_2'}^2, v_{i_2}^1)$, and delete either $(v_{i_2'}^1, v_{i_2}^2)$ or $(v_{i_2}^2, v_{i_2}^1)$, such that $G_1$ has not $\overrightarrow{C_3}$. In order to make $e(G_1)=e(G_2)$, we add two arcs. Without loss of generality, assume that adding an arc with a tail of $v_{i_3}^{j_3}$ and an arc with a tail of $v_{i_4}^{j_4}$, where $i_4\leq i_3$. Next we are considering which arcs to delete.

If we delete either $(v_{i_2'}^1, v_{i_2'}^2)$ or $(v_{i_2'}^2, v_{i_2}^1)$, and delete $(v_{i_2'}^1, v_{i_2}^2)$, we consider two cases as follows.

\medskip\noindent
\textbf{Case 1'.} $i_2'<i_3$ and $i_2'<i_4$.

If $i_2'<i_3$ and $i_2'<i_4$, the outdegree descending sequence of $F^{\frac{n+1}{2}}$ and $G_2$ also satisfy the majorization and $M_1(F^{\frac{n+1}{2}})>M_1(G_2)$.

\medskip\noindent
\textbf{Case 2'.} $i_2'\geq i_3$ or $i_2'\geq i_4$.

If $i_2'\geq i_3$ or $i_2'\geq i_4$, then by the structure of $F^{\frac{n+1}{2}}$, we can find $\overrightarrow{C_3}$ in $G_2$ (see Figure~\ref{fi:Case22} for example):
$$v_{i_3}^{j_3} \rightarrow v_{i_3'}^1 \rightarrow v_{i_3'}^2 \rightarrow v_{i_3}^{j_3}$$
or
$$v_{i_4}^{j_4} \rightarrow v_{i_4'}^{2} \rightarrow v_{i_4'}^{1} \rightarrow v_{i_4}^{j_4},$$
where $i_3'<i_3$ and $i_4'<i_4$.

\begin{figure}[htbp]
\centering
\includegraphics[scale=1.2]{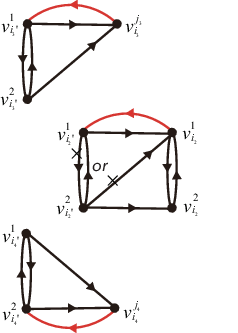}
\caption{The example of $\protect\overrightarrow{C_3}$ in $G_2$ of Case 2'}\label{fi:Case22}
\end{figure}

If we delete either $(v_{i_2'}^1, v_{i_2'}^2)$ or $(v_{i_2'}^2, v_{i_2}^1)$, and delete $(v_{i_2}^2, v_{i_2}^1)$, we consider two cases as follows.

\medskip\noindent
\textbf{Case 1''.} $i_3>i_2'$ and $i_4>i_2$.

If $i_3>i_2'$ and $i_4>i_2$, the outdegree descending sequence of $F^{\frac{n+1}{2}}$ and $G_2$ also satisfy the majorization and $M_1(F^{\frac{n+1}{2}})>M_1(G_2)$.

\medskip\noindent
\textbf{Case 2''.} $i_3\leq i_2'$ or $i_4\leq i_2$.

If $i_3\leq i_2'$ or $i_4\leq i_2$, then by the structure of $F^{\frac{n+1}{2}}$, we can find $\overrightarrow{C_3}$ in $G_2$ (see Figure~\ref{fi:Case222} for example):
$$v_{i_3}^{j_3} \rightarrow v_{i_3'}^1 \rightarrow v_{i_3'}^2 \rightarrow v_{i_3}^{j_3}$$
or
$$v_{i_4}^{j_4} \rightarrow v_{i_4'}^{2} \rightarrow v_{i_4'}^{1} \rightarrow v_{i_4}^{j_4},$$
where $i_3'<i_3$ and $i_4'<i_4$.

\begin{figure}[htbp]
\centering
\includegraphics[scale=1.2]{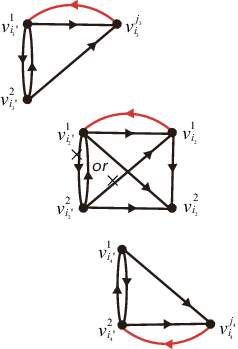}
\caption{The example of $\protect\overrightarrow{C_3}$ in $G_2$ of Case 2'}\label{fi:Case222}
\end{figure}

So by Cases 1' and 1'', we get $M_1(F^{\frac{n+1}{2}})>M_1(G_2)$. And by Cases 2' and 2'', we might have $M_1(F^{\frac{n+1}{2}})<M_1(G_2)$ but $G_2$ is not a $\overrightarrow{C_3}$-free digraph.

\medskip\medskip
We continue to perform arc transformations based on $G_2$ in Case 2' or Case 2'', to make $G_2$ has not $\overrightarrow{C_3}$. We also obtain the new digraph by deleting arcs and adding arcs in $G_2$ to keep the number of arcs unchanged. Then we are once again returning to the case similar to Cases 1-2 and starting the loop anew. In this loop, there has always been $G'$, such that $M_1(F^{\frac{n+1}{2}})>M_1(G')$ or $G'$ has $\overrightarrow{C_3}$.

Summing up the above, Claim~\ref{cl:ch-1} is completed. Hence, if $e(G^\ast)=\textnormal{ex}(n,\overrightarrow{C_3})$ and $r=1$, $\overrightarrow{{F}_{n,2}^{\frac{n+1}{2}}}$ is a $\overrightarrow{C_3}$-free digraph having the maximum First Zagreb Index.
If $e(G^\ast)=\textnormal{ex}(n,\overrightarrow{C_3})$ and $r=0$, the proof is similar to the case of $r=1$. Thus $\overrightarrow{{F}_{n,2}^{0}}$ is a $\overrightarrow{C_3}$-free digraph having the maximum First Zagreb Index.

\medskip\medskip
\noindent(ii) $e(G^\ast)<\textnormal{ex}(n,\overrightarrow{C_3})$.

If $e(G^\ast)<\textnormal{ex}(n,\overrightarrow{C_3})$, we also only consider the case when $r=1$; the case when $r=0$ is similar. We still perform the arc transformation on $F^{\frac{n+1}{2}}$.

First, we get a new digraph $F'$ by deleting some arcs in $F^{\frac{n+1}{2}}$, such that $e(F')=e(G^\ast)$.

Recall that $\mathcal{V}(F^{\frac{n+1}{2}})=\{v_1^1,v_1^2,v_2^1,v_2^2,\ldots,v_{\frac{n-1}{2}}^1,v_{\frac{n-1}{2}}^2,v_{\frac{n+1}{2}}^1\}$ and the outdegree sequence of $F^{\frac{n+1}{2}}$ is $\{n-1,n-1,n-3,n-3,n-5,n-5,\ldots,4,4,2,2,0\}$.

Let $e'=\textnormal{ex}(n,\overrightarrow{C_3})-e(G^\ast)$. In $F^{\frac{n+1}{2}}$, we start deleting arcs from the vertex with the smallest outdegree, as shown in the table~\ref{ta:ch-1}.

\begin{table}[ht]
\renewcommand\arraystretch{1.5}\centering
\caption{The impact of the number of arcs deleted from $F^{\frac{n+1}{2}}$ on the outdegree of $F'$}\label{ta:ch-1}
\begin{tabular}{|c|c|}
\hline
$e'$ & The outdegree of $F'$  \\ \hline
1 & \tabincell{c}{$d_{F'}(v)=d_{F^{\frac{n+1}{2}}}(v)$ for $v\in\mathcal{V}(F^{\frac{n+1}{2}})$ and $v\neq v_{\frac{n-1}{2}}^2$;\\ $d_{F'}(v_{\frac{n-1}{2}}^2)=1$}  \\ \hline
2 & \tabincell{c}{$d_{F'}(v)=d_{F^{\frac{n+1}{2}}}(v)$ for $v\in\mathcal{V}(F^{\frac{n+1}{2}})$ and $v\neq v_{\frac{n-1}{2}}^2$;\\ $d_{F'}(v_{\frac{n-1}{2}}^2)=0$}  \\ \hline
3 & \tabincell{c}{$d_{F'}(v)=d_{F^{\frac{n+1}{2}}}(v)$ for $v\in\mathcal{V}(F^{\frac{n+1}{2}})$ and $v\neq v_{\frac{n-1}{2}}^1,v_{\frac{n-1}{2}}^2$;\\ $d_{F'}(v_{\frac{n-1}{2}}^1)=1$ and $d_{F'}(v_{\frac{n-1}{2}}^2)=0$}  \\ \hline
4 & \tabincell{c}{$d_{F'}(v)=d_{F^{\frac{n+1}{2}}}(v)$ for $v\in\mathcal{V}(F^{\frac{n+1}{2}})$ and $v\neq v_{\frac{n-1}{2}}^1,v_{\frac{n-1}{2}}^2$;\\ $d_{F'}(v_{\frac{n-1}{2}}^1)=0$ and $d_{F'}(v_{\frac{n-1}{2}}^2)=0$}  \\ \hline
$\vdots$ & $\vdots$ \\ \hline
\end{tabular}
\end{table}

For the new digraph $F'$, we know $F'$ is a $\overrightarrow{C_3}$-free digraph and $M_1(F^{\frac{n+1}{2}})>M_1(F')$. We then increase or decrease the arcs in $F'$ to obtain the new digraph $G'$, where $e(F')=e(G')$. Similar to the aforementioned proof of Claim~\ref{cl:ch-1}, if the outdegree descending sequence of $F'$ and $G'$ satisfy the majorization, then $M_1(F')>M_1(G')$; if the outdegree descending sequence of $F'$ and $G'$ do not satisfy the majorization, then $G'$ has $\overrightarrow{C_3}$. So, we can not find a $\overrightarrow{C_3}$-free digraph $G$, such that $M_1(F')\leq M_1(G)$.

So if  $e(G^\ast)<\textnormal{ex}(n,\overrightarrow{C_3})$, we get $M_1(F^{\frac{n+1}{2}})>M_1(G^\ast)$, a contradiction.

\medskip\medskip
Therefore, we obtain
$$\textnormal{ex}_{M_1}(n,\overrightarrow{C_3})=\sum_{i=1}^q 2(n-1-2(i-1))^2=\frac{2q}{3}(3n^2-6qn+4q^2-1),$$
and
$$\textnormal{EX}_{M_1}(n,\overrightarrow{C_3})=
\begin{cases}
\overrightarrow{{F}_{n,2}^{\frac{n+1}{2}}},& \textnormal{if} \ r=1,\\
\overrightarrow{{F}_{n,2}^0},& \textnormal{if} \ r=0.
\end{cases}$$

\end{proof}

Next, we consider the extremal Laplacian energy of $\overrightarrow{C_3}$-free digraphs.

\medskip\medskip
\noindent\textbf{Proof of Theorem~\ref{th:exLEC3}:}

Since Laplacian energy $LE(G)=\sum_{i=1}^n(d^+_i)^2+c_2=M_1+c_2$, we need to consider $c_2$. Assume that $G^\ast$ is a $\overrightarrow{C_3}$-free digraph having the maximum Laplacian energy and consider two cases: (i) $e(G^\ast)=\textnormal{ex}(n,\overrightarrow{C_3})$; (ii) $e(G^\ast)<\textnormal{ex}(n,\overrightarrow{C_3})$. We continue the proof based on the proof of Lemma~\ref{le:exMC3}. From Lemma~\ref{le:exMC3}, we have obtained $\textnormal{EX}_{M_1}(n,\overrightarrow{C_3})=\overrightarrow{{F}_{n,2}^{\frac{n+1}{2}}}$ if $r=1$ and
$\textnormal{EX}_{M_1}(n,\overrightarrow{C_3})=\overrightarrow{{F}_{n,2}^0}$ if $r=0$.
We also only consider the case when $r=1$; the case when $r=0$ is similar.

\medskip\medskip
\noindent (i) $e(G^\ast)=\textnormal{ex}(n,\overrightarrow{C_3})$.

By the Claim~\ref{cl:ch-1} of Lemma~\ref{le:exMC3}, we have already obtained $F^{\frac{n+1}{2}}$ is a $\overrightarrow{C_3}$-free digraph having the maximum First Zagreb Index. By the operations on the arcs in $F^{\frac{n+1}{2}}$:
\begin{center}
\textbf{delete an arc with a tail of $v_{i_1}^{j_1}$ and add an arc with a tail of $v_{i_2}^{j_2}$}.
\end{center}
Adding $(v_{i_2}^{j_2},v_{i'_2}^{j'_2})$ must add one directed closed walk of length 2; deleting $(v_{i_1}^{j_1},v_{i'_1}^{j'_1})$, $c_2(F^{\frac{n+1}{2}})$ may remain unchanged or decrease by 2, where $i'_2<i_2$ and $i_1\leq i'_1$. So $c_2(G_1)=c_2(F^{\frac{n+1}{2}})+2$ or $c_2(G_1)=c_2(F^{\frac{n+1}{2}})$.

By the Case~2 ($i_1\geq i_2$) in Claim~\ref{cl:ch-1}, we will always find a $\overrightarrow{C_3}$ in the new digraph. By the Case~1 ($i_1<i_2$) in Claim~\ref{cl:ch-1}, since $d_{G_1}^+(v_{i_1}^{j_1})=d_{F^{\frac{n+1}{2}}}^+(v_{i_1}^{j_1})-1$, $d_{G_1}^+(v_{i_2}^{j_2})=d_{F^{\frac{n+1}{2}}}^+(v_{i_2}^{j_2})+1$, and $d_{F^{\frac{n+1}{2}}}^+(v_{i_1}^{j_1})>d_{F^{\frac{n+1}{2}}}^+(v_{i_2}^{j_2})$, we have
$$M_1(G_1)-\left(d_{F^{\frac{n+1}{2}}}^+(v_{i_1}^{j_1})-1\right)^2-\left(d_{F^{\frac{n+1}{2}}}^+(v_{i_2}^{j_2})+1\right)^2=
M_1(F^{\frac{n+1}{2}})-\left(d_{F^{\frac{n+1}{2}}}^+(v_{i_1}^{j_1})\right)^2-\left(d_{F^{\frac{n+1}{2}}}^+(v_{i_2}^{j_2})\right)^2.$$

So
\begin{align*}
LE(G_1)&=M_1(G_1)+c_2(G_1)\\
&\leq M_1(F^{\frac{n+1}{2}})-\left(d_{F^{\frac{n+1}{2}}}^+(v_{i_1}^{j_1})\right)^2-\left(d_{F^{\frac{n+1}{2}}}^+(v_{i_2}^{j_2})\right)^2\\
&+\left(d_{F^{\frac{n+1}{2}}}^+(v_{i_1}^{j_1})-1\right)^2+\left(d_{F^{\frac{n+1}{2}}}^+(v_{i_2}^{j_2})+1\right)^2+c_2(F^{\frac{n+1}{2}})+2\\
&=LE(F^{\frac{n+1}{2}})-2\left(d_{F^{\frac{n+1}{2}}}^+(v_{i_1}^{j_1})-d_{F^{\frac{n+1}{2}}}^+(v_{i_2}^{j_2})\right)+4\\
&\leq LE(F^{\frac{n+1}{2}})-4+4=LE(F^{\frac{n+1}{2}}).
\end{align*}
The first inequality is an equation if and only if
$$c_2(G_1)=c_2(F^{\frac{n+1}{2}})+2.$$
Thus, $i_1<i'_1$. The second inequality is an equation if and only if
$$d_{F^{\frac{n+1}{2}}}^+(v_{i_1}^{j_1})-d_{F^{\frac{n+1}{2}}}^+(v_{i_2}^{j_2})=2,$$
by this time, $i_1+1=i_2$. But $G_1$ must has $\overrightarrow{C_3}$: $v_{i_2}^1\rightarrow v_{i'_2}^{j'_2}\rightarrow v_{i_2}^2\rightarrow v_{i_2}^1$ (There might be other $\overrightarrow{C_3}$ as well). So merely changing one arc is not enough. We need to perform multiple arc transformations.

Let $G_s$ denote the new digraph obtained after each arc transformation. Suppose that there are $t$ arcs undergoing arc transformations such that the final digraph $G_t$ is a $\overrightarrow{C_3}$-free digraph and $G_s$ is not a $\overrightarrow{C_3}$-free digraph, where $s=1,2,\ldots,t-1$. Let us assume that the arcs that undergoes a deletion or addition operation each time be: delete arc $(v_{i_{2s-1}}^{j_{2s-1}},v_{i'_{2s-1}}^{j'_{2s-1}})$ and add arc $(v_{i_{2s}}^{j_{2s}},v_{i'_{2s}}^{j'_{2s}})$, where $s=1,2,\ldots,t$. (Note: these vertices may overlap.)

Let $x=\text{min}\{i_{2s-1},i'_{2s-1},i_{2s},i'_{2s}\}$ and $y=\text{max}\{i_{2s-1},i'_{2s-1},i_{2s},i'_{2s}\}$ be the minimum and maximum indices of these vertices, respectively, where $s=1,2,\ldots,t$. We place all the vertices labeled as $\{x, x+1, x+2,\ldots, y-1,y\}$ into one vertex set $U$. That is, we place all the vertices in $\left\{F^{\frac{n+1}{2}}[V_i]: i=x,x+1,x+2,\ldots,y-1,y\right\}$ into one vertex set $U$.

Actually, we know that $\textnormal{EX}(n,\overrightarrow{C_3})=\overrightarrow{\mathcal{BK}_{n,p}}$, where $\overrightarrow{BK_{n,p}}[V_i]$ is a balanced complete bipartite digraph for $i\in[1,p]$ and $|V_i|=n_i$ is an even for all but at most one $V_i$. So the induced subdigraph by the vertices in $U$ must be a balanced complete bipartite digraph.

Through the proof of Lemma~\ref{le:exMC3} and the analysis of a single arc transformation, we get $LE(G_1)\leq LE(F^{\frac{n+1}{2}})$. In order to find the $\overrightarrow{C_3}$-free digraph with maximum Laplacian energy, next, we consider the number of vertices of $U$ such that $LE(G_t)=LE(F^{\frac{n+1}{2}})$.

We divide the vertex set of $F^{\frac{n+1}{2}}$ and $G_t$ into three subsets: $X=\bigcup_{i=1}^{x-1}V_i$, $U=\bigcup_{i=x}^{y}V_i$, $Y=\bigcup_{i=y+1}^{q'}V_i$. Since the arcs in $X$ and $Y$, the arcs between $X$ and $Y$, the arcs between $X$ and $U$ and the arcs between $U$ and $Y$ remain unchanged, we only need consider the outdegrees of the vertices and the directed closed walk of length 2 in $U$. Let $|U|=\ell$.

\medskip\noindent\textbf{Case 1.} If $|U|$ is odd, then $Y=\emptyset$.

For $U$ in $\overrightarrow{F^{\frac{n+1}{2}}}$, the sum of the squares of the outdegrees of the vertices plus the directed closed walk of length 2 is
$$\sum_{i=1}^{(\ell-1)/2}2(\ell-1-2(i-1))^2+\ell-1=\frac{1}{3}{\ell}^3+\frac{2}{3}\ell-1.$$

For $U$ in $G_t$, since $\overrightarrow{G_t}[U]$ is a balanced complete bipartite digraph, the sum of the squares of the outdegrees of the vertices plus the directed closed walk of length 2 is
$$\frac{\ell+1}{2}\left(\frac{\ell-1}{2}\right)^2+\frac{\ell-1}{2}\left(\frac{\ell+1}{2}\right)^2
+2\left(\frac{\ell-1}{2}\right)\left(\frac{\ell+1}{2}\right)
=\frac{1}{4}{\ell}^3+\frac{1}{2}{\ell}^2-\frac{1}{4}{\ell}-\frac{1}{2}.$$

Let $$\frac{1}{3}{\ell}^3+\frac{2}{3}\ell-1=\frac{1}{4}{\ell}^3+\frac{1}{2}{\ell}^2-\frac{1}{4}{\ell}-\frac{1}{2}.$$
Solve for $\ell$, we get $\ell=1,2,3$. For the assumption of $U$, $|U|=\ell=3$.

\medskip\noindent\textbf{Case 2.} If $|U|$ is even, then $Y\neq\emptyset$.

For $U$ in $\overrightarrow{F^{\frac{n+1}{2}}}$, the sum of the squares of the outdegrees of the vertices plus the directed closed walk of length 2 is
$$\sum_{i=1}^{\ell/2}2(\ell-1+n-2y-2(i-1))^2+\ell=\frac{1}{3}{\ell}^3+(n-2y){\ell}^2+\left((n-2y)^2+\frac{2}{3}\right)\ell.$$

For $U$ in $G_t$, since $\overrightarrow{G_t}[U]$ is a balanced complete bipartite digraph, the sum of the squares of the outdegrees of the vertices plus the directed closed walk of length 2 is
$$\ell\left(\frac{\ell}{2}+n-2y\right)^2+2\left(\frac{\ell}{2}\right)^2=\frac{1}{4}{\ell}^3+\left(n-2y+\frac{1}{2}\right){\ell}^2+(n-2y)^2\ell.$$

Let $$\frac{1}{3}{\ell}^3+(n-2y){\ell}^2+\left((n-2y)^2+\frac{2}{3}\right)\ell=\frac{1}{4}{\ell}^3+\left(n-2y+\frac{1}{2}\right){\ell}^2+(n-2y)^2\ell.$$
Solve for $\ell$,we get $\ell=0,2,4$. For the assumption of $U$, $|U|=\ell=4$.

Hence, by Case 1-2, we get $G_t\subseteq\overrightarrow{\mathcal{BK}^1_{n,p}}$ like that: for integer $n\in \mathbb{N}^+$, $n=2q+1$,
\begin{itemize}
  \item [(a)] $\mathcal{V}(G_t)$ has a partition $\{V_1, V_2, \ldots, V_{q-1}\}$ such that $|V_i|=2$ for all but at most one $|V_i|=4$ and $|V_q|=1$, or $|V_i|=2$ for all and $|V_{q}|=3$;
  \item [(b)] $\overrightarrow{F_{n,k}}[V_i]$ is a balanced complete bipartite digraph for $i\in[1,q]$;
  \item [(c)] for any two parts $V_i$ and $V_j$ with $1\leq i<j\leq q$, $V_i \mapsto V_j$.
\end{itemize}

Building upon $G_t=G^0_t$, we perform further arc transformations until the resulting digraph $G^1_t$ is $\overrightarrow{C_3}$-free. This process is analogous to the analysis of $G_t$: we collect all vertices whose indegree or outdegree changes, along with those vertices in the middle of these vertices that have a transmission relationship, into a single set $U^1$ such that $LE(G^1_t)=LE(G_t)$. Ultimately, we still obtain $|U^1|=3$ or $4$.
Building upon $G^1_t$, we perform further arc transformations until the resulting digraph $G^2_t$ is $\overrightarrow{C_3}$-free. Then we also can get $|U^2|=3$ or $4$ such that $LE(G^2_t)=LE(G^1_t)$. Proceeding in this way, we can always obtain a $\overrightarrow{C_3}$-free digraph until there do not exist two adjacent vertex sets each of order 2. Moreover, all digraphs $\{G^0_t,G^1_t,G^2_t,\cdots\}$ obtained during this process are $\overrightarrow{C_3}$-free digraph and
$$LE(F^{\frac{n+1}{2}})=LE(G^0_t)=LE(G^1_t)=LE(G^2_t)=\cdots.$$
Also, $\{F^{\frac{n+1}{2}},G^0_t,G^1_t,G^2_t,\cdots\}\subseteq\overrightarrow{\mathcal{BK}^1_{n,p}}$.

Therefore, $\overrightarrow{BK^1_{n,p}}\subseteq\overrightarrow{\mathcal{BK}^1_{n,p}}$ is a $\overrightarrow{C_3}$-free digraph having the maximum Laplacian energy if $e(G^\ast)=\textnormal{ex}(n,\overrightarrow{C_3})$.

\medskip\medskip
\noindent(ii) $e(G^\ast)<\textnormal{ex}(n,\overrightarrow{C_3})$.

From the proof of Lemma~\ref{le:exMC3}, we have obtained $F'$ is a $\overrightarrow{C_3}$-free digraph and $M_1(F^{\frac{n+1}{2}})>M_1(F')>M_1(G')$, where $G'$ is an any $\overrightarrow{C_3}$-free digraph such that $e(F^{\frac{n+1}{2}})>e(F')=e(G')$ and $G'\ncong F'$. Similar to the aforementioned proof of $c_2$ in $F^{\frac{n+1}{2}}$ and $G_1$ when $e(G^\ast)=\textnormal{ex}(n,\overrightarrow{C_3})$, we can get
$$LE(F')\geq LE(G').$$
Since
\begin{align*}
&LE(\overrightarrow{BK^1_{n,p}})=LE(F^{\frac{n+1}{2}})=M_1(F^{\frac{n+1}{2}})+c_2(F^{\frac{n+1}{2}})\\
&>M_1(F')+c_2(F')=LE(F')\geq LE(G'),
\end{align*}
we get $\overrightarrow{BK^1_{n,p}}\subseteq\overrightarrow{\mathcal{BK}^1_{n,p}}$ is a $\overrightarrow{C_3}$-free digraph having the maximum Laplacian energy.

\medskip\medskip
Thus, $$\textnormal{EX}_{LE}(n,\overrightarrow{C_3})=\overrightarrow{\mathcal{BK}^1_{n,p}}$$ when $r=1$.

\medskip\medskip
Similarly, $$\textnormal{EX}_{LE}(n,\overrightarrow{C_3})=\overrightarrow{\mathcal{BK}^0_{n,p}}$$ when $r=0$.

\medskip\medskip
In conclusion, we obtain
$$\textnormal{ex}_{LE}(n,\overrightarrow{C_3})=\sum_{i=1}^q 2(n-1-2(i-1))^2+2q=\frac{2q}{3}(3n^2-6qn+4q^2+2),$$
and
$$\textnormal{EX}_{LE}(n,\overrightarrow{C_3})=
\begin{cases}
\overrightarrow{\mathcal{BK}^1_{n,p}},& \textnormal{if} \ r=1,\\
\overrightarrow{\mathcal{BK}^0_{n,p}},& \textnormal{if} \ r=0.
\end{cases}$$
This completes the proof of Theorem~\ref{th:exLEC3}.

$\hfill\square$

\medskip\medskip
As an illustration of Theorem~\ref{th:exLEC3}, Figure~\ref{fi:example1} shows all $\overrightarrow{C_3}$-free digraphs of order $11$ having the maximum Laplacian energy.

\begin{figure}[htbp]
    \centering
    \begin{subfigure}[b]{\textwidth}
        \includegraphics[scale=0.7]{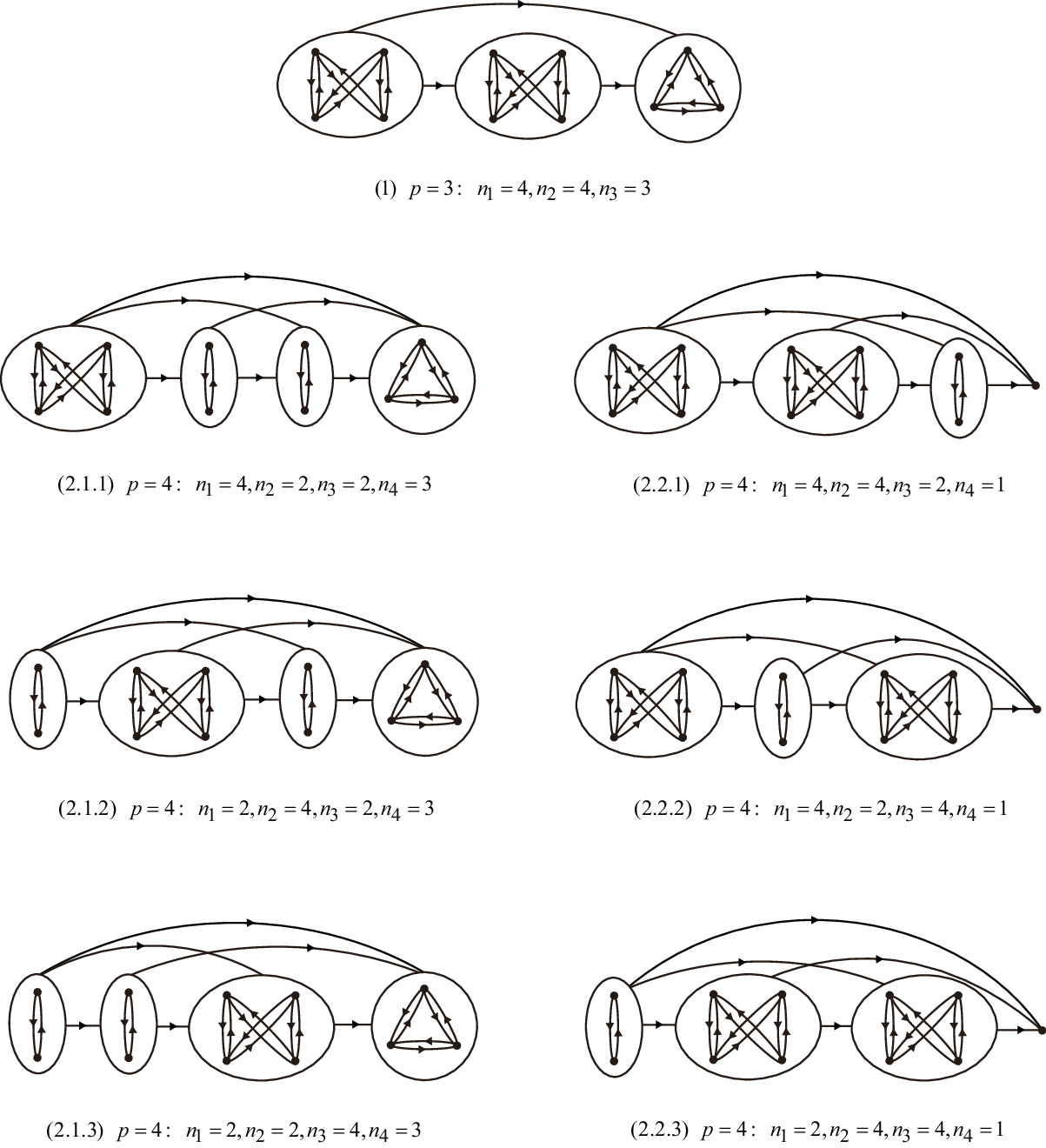}
    \end{subfigure}
    \caption{$\textnormal{EX}_{LE}(11,\protect\overrightarrow{C_3})$}
    \label{fi:example1}
\end{figure}

\begin{figure}[htbp]
\ContinuedFloat
    \setcounter{subfigure}{2}
    \centering
    \begin{subfigure}[b]{\textwidth}
        \includegraphics[scale=0.7]{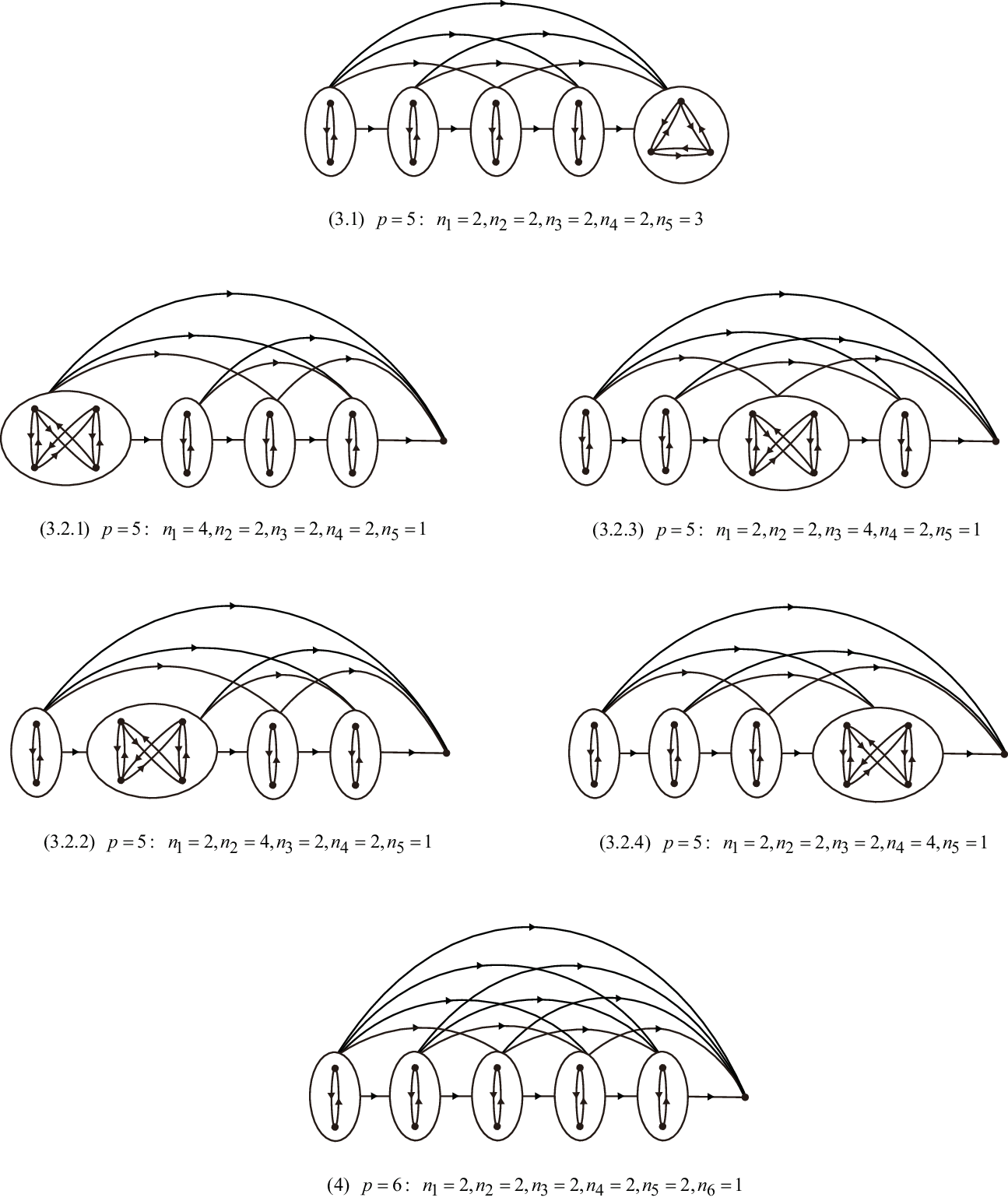}
    \end{subfigure}
    %\caption{$\textnormal{EX}_{LE}(11,\protect\overrightarrow{C_3})$} 
    \label{fi:example2}
\end{figure}

{\section{Extremal Laplacian energy of $\protect\overrightarrow{C_{k+1}}$-free digraph when $k\geq3$}\label{sec3}}

In this section, we give the proof of Theorem~\ref{th:exLEC}.
By Theorem~\ref{th:exC}, we have $\textnormal{EX}(n,\overrightarrow{C_{k+1}})=\overrightarrow{\mathcal{F}_{n,k}}$ when $k\geq3$. We first consider the digraph which has the maximum Laplacian energy in $\overrightarrow{\mathcal{F}_{n,k}}$. In fact, we can get the ordering of the Laplacian energy of digraphs in $\overrightarrow{\mathcal{F}_{n,k}}$ when $\overleftrightarrow{K_r}$ at different locations.

\noindent\begin{lemma}\label{le:ordering} Let $k,n\in \mathbb{N}^+$, $n=qk+r$, $0<r<k$. Let $\overrightarrow{{F}_{n,k}^{s+1}}\in\overrightarrow{\mathcal{F}_{n,k}}$ such that $\overrightarrow{F_{n,k}}[V_{s+1}]=\overleftrightarrow{K_r}$ and $\overrightarrow{F_{n,k}}[V_{i}]=\overleftrightarrow{K_k}$ for $i=1,\ldots,s,s+2,\ldots,q+1$, where $s=0,1,\ldots,q$. Then
$$LE(\overrightarrow{{F}_{n,k}^1})<LE(\overrightarrow{{F}_{n,k}^2})<\cdots<
LE(\overrightarrow{{F}_{n,k}^{q}})<LE(\overrightarrow{{F}_{n,k}^{q+1}}).$$
\end{lemma}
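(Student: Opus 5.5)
The plan is to compute $LE(\overrightarrow{F_{n,k}^{s+1}})$ explicitly from the formula of Qi et al., $LE(G)=\sum_{i=1}^n(d^+_i)^2+c_2$, and then show that consecutive differences $LE(\overrightarrow{F_{n,k}^{s+2}})-LE(\overrightarrow{F_{n,k}^{s+1}})$ are positive for $s=0,1,\ldots,q-1$.

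The term $c_2$ does not depend on $s$. Directed closed walks of length $2$ arise only inside the complete blocks, because $V_i\mapsto V_j$ for $i<j$ means the arcs between blocks go one way only. Hence $c_2=qk(k-1)+r(r-1)$ for every $s$, and it suffices to compare the first Zagreb index $M_1$.

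In $\overrightarrow{F_{n,k}^{s+1}}$, a vertex of block $V_i$ has outdegree $(|V_i|-1)+\sum_{j>i}|V_j|$. For comparing $s$ and $s+1$, observe that passing from $\overrightarrow{F_{n,k}^{s+1}}$ to $\overrightarrow{F_{n,k}^{s+2}}$ only swaps the $\overleftrightarrow{K_r}$ block at position $s+1$ with the $\overleftrightarrow{K_k}$ block at position $s+2$. All other blocks keep the same outdegrees, since the set of vertices after them is unchanged. Let $m$ be the number of vertices after these two blocks.
\begin{itemize}
\item Before the swap, the $r$ vertices have outdegree $r-1+k+m$ and the $k$ vertices have outdegree $k-1+m$.
\item After the swap, the $k$ vertices have outdegree $k-1+r+m$ and the $r$ vertices have outdegree $r-1+m$.
\end{itemize}
The difference is therefore
\begin{align*}
\Delta={}&k(k-1+r+m)^2+r(r-1+m)^2\\
&-r(r-1+k+m)^2-k(k-1+m)^2 .
\end{align*}
Expanding, we have $k[(a+r)^2-a^2]=k(2ar+r^2)$ with $a=k-1+m$, and $r[(b)^2-(b+k)^2]=-r(2bk+k^2)$ with $b=r-1+m$. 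So
\[
\Delta=kr\bigl(2a+r-2b-k\bigr)=kr\bigl(2(k-r)+r-k\bigr)=kr(k-r).
\]
This is positive since $0<r<k$, which gives the strict chain.

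As a sanity check I would also cross-check against the closed form for $s=q$ in Theorem~\ref{th:exLEC}, though that is not needed for the lemma. The main obstacle is only bookkeeping: confirming that $c_2$ is invariant and that only the two swapped blocks change their outdegrees. Both follow directly from condition (iii) in the definition of $\overrightarrow{F_{n,k}}$. Notably, $\Delta$ turns out to be independent of $m$, which makes the monotonicity immediate.
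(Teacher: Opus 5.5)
Your proof is correct, and it takes a different route from the paper's. The paper writes down the sorted outdegree sequence of $\overrightarrow{F_{n,k}^{s+1}}$. It then checks, through a five-case analysis of the partial sums $Sd_t$, that the sequence for $s_1$ majorizes the one for $s_2$ whenever $s_2<s_1$, and applies Karamata's inequality. You instead compare only consecutive members. You observe that going from $s$ to $s+1$ just swaps two adjacent blocks, so every other outdegree is untouched, and you compute the change in $M_1$ exactly as $kr(k-r)$, independent of $s$.

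The majorization argument is more general: it compares all pairs $s_2<s_1$ directly, and it gives the inequality for any strictly convex function of the outdegrees, in line with the Karamata method used throughout the paper. Your computation is more precise and shorter:
\begin{itemize}
\item it yields the closed form $LE(\overrightarrow{F_{n,k}^{s+1}})=LE(\overrightarrow{F_{n,k}^{1}})+s\,kr(k-r)$;
\item it makes explicit that $c_2=qk(k-1)+r(r-1)$ is the same for every $s$, whereas the paper passes from Karamata (a statement about $M_1$) to $LE$ without saying so;
\item it avoids the partial-sum bookkeeping, where the paper's displayed differences are slightly off.
\end{itemize}
On the last point, Case 3.1 really gives $t_2r$, which is $0$ when $t_2=0$, and Case 3.2 gives $(k-r)(k-t_2)$. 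Majorization only needs the weak inequalities, so the paper's conclusion is unaffected.
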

\begin{proof} Let $\overrightarrow{{F}_{n,k}^{s+1}}\in\overrightarrow{\mathcal{F}_{n,k}}$ such that $\overrightarrow{F_{n,k}}[V_{s+1}]=\overleftrightarrow{K_r}$ and $\overrightarrow{F_{n,k}}[V_{i}]=\overleftrightarrow{K_k}$ for $i=1,\ldots,s,s+2,\ldots,q+1$, where $s=0,1,\ldots,q$. That is,
$$\begin{cases}
\overrightarrow{F_{n,k}}[V_{1}]=\overrightarrow{F_{n,k}}[V_{2}]=\cdots=\overrightarrow{F_{n,k}}[V_{s}]=\overleftrightarrow{K_k},\\
\overrightarrow{F_{n,k}}[V_{s+1}]=\overleftrightarrow{K_r},\\
\overrightarrow{F_{n,k}}[V_{s+2}]=\cdots=\overrightarrow{F_{n,k}}[V_{q+1}]=\overleftrightarrow{K_k}.
\end{cases}$$
Let $V_{i}=\{v_i^1,v_i^2,\ldots,v_i^k\}$, where $i=1,2,\ldots,s,s+2,\ldots,q+1$ and $V_{s+1}=\{v_{s+1}^1,v_{s+1}^2,\ldots,v_{s+1}^r\}$. Then
$$\begin{cases}
d_G^+(v_i^j)=n-1-(i-1)k, & i=1,2,\ldots,s,\ j=1,2,\ldots,k,\\
d_G^+(v_{s+1}^j)=n-1-sk, & j=1,2,\ldots,r,\\
d_G^+(v_i^j)=n-1-(i-2)k-r, & i=s+2,\ldots,q+1,\ j=1,2,\ldots,k.
\end{cases}$$
Thus, the outdegree descending sequence of $\overrightarrow{{F}_{n,k}^{s+1}}$ is
$$\left\{\underbrace{n-1}_{k},\ \underbrace{n-1-k}_{k}, \ldots, \underbrace{n-1-(s-1)k}_{k},\ \underbrace{n-1-sk}_{r},\
\underbrace{n-1-sk-r}_{k}, \ldots, \underbrace{n-1-(q-1)k-r}_{k}\right\}.$$

Let $Sd_t(\overrightarrow{{F}_{n,k}^{s+1}})$ be the sum of the $t$ largest outdegree of $\overrightarrow{{F}_{n,k}^{s+1}}$. For convenience, we denote $\overrightarrow{{F}_{n,k}^{s+1}}=F^{s+1}$. Next we prove $Sd_t(F^{s_2+1})\leq Sd_t(F^{s_1+1})$ for $s_2<s_1$. For $t$, we have five cases: $1\leq t\leq s_2 k$, $s_2 k+1\leq t\leq s_2 k+r$, $s_2 k+r+1\leq t\leq s_1 k$, $s_1 k+1\leq t\leq s_1 k+r$, $s_1 k+r+1\leq t\leq n$.

\medskip\noindent\textbf{Case 1.} If $1\leq t\leq s_2 k$, then $Sd_t(F^{s_2+1})=Sd_t(F^{s_1+1})$.

Since $1\leq t\leq s_2 k< s_1 k$, we have $F^{s_2+1}[V_i]=F^{s_1+1}[V_i]=\overleftrightarrow{K_k}$ for $i=1,2,\ldots,s_2$.
Let $t=q_1k+t_1$, where $q_1\leq s_2$ and $0\leq t_1<k$. Then
$$Sd_t(F^{s_2+1})=Sd_t(F^{s_1+1})=\sum_{i=1}^{q_1} k(n-1-(i-1)k)+t_1(n-1-q_1 k).$$

\medskip\noindent\textbf{Case 2.} If $s_2 k+1\leq t\leq s_2 k+r$, then $Sd_t(F^{s_2+1})=Sd_t(F^{s_1+1})$.

Since $s_2 k+1\leq t\leq s_2 k+r$, we have $F^{s_2+1}[V_i]=F^{s_1+1}[V_i]=\overleftrightarrow{K_k}$ for $i=1,2,\ldots,s_2$, $F^{s_2+1}[V_{s_2+1}]=\overleftrightarrow{K_r}$ and $F^{s_1+1}[V_{s_2+1}]=\overleftrightarrow{K_k}$. So
$$Sd_t(F^{s_2+1})=Sd_t(F^{s_1+1})=\sum_{i=1}^{s_2} k(n-1-(i-1)k)+(t-s_2k)(n-1-s_2 k).$$

\medskip\noindent\textbf{Case 3.} If $s_2 k+r+1\leq t\leq s_1 k$, then $Sd_t(F^{s_2+1})<Sd_t(F^{s_1+1})$.

Since $s_2 k+r+1\leq t\leq s_1 k$, we have $F^{s_2+1}[V_i]=\overleftrightarrow{K_k}$ for $i=1,2,\ldots,s_2,s_2+2,\ldots,s_1$, $F^{s_2+1}[V_{s_2+1}]=\overleftrightarrow{K_r}$, and $F^{s_1+1}[V_i]=\overleftrightarrow{K_k}$ for $i=1,2,\ldots,s_1$.
For $Sd_t(F^{s_2+1})$, let $t=s_2k+r+q_2k+t_2$, where $0\leq t_2<k$ and $0<r<k$. Then
\begin{align*}
Sd_t(F^{s_2+1})&=\sum_{i=1}^{s_2} k(n-1-(i-1)k)+r(n-1-s_2k)\\
&+\sum_{i=s_2+2}^{s_2+q_2+1} k(n-1-(i-2)k-r)+t_2(n-1-(s_2+q_2)k-r).
\end{align*}
For $Sd_t(F^{s_1+1})$, since $t=(s_2+q_2)k+t_2+r$, we consider two cases: $0\leq t_2+r<k$ and $k\leq t_2+r<2k$. Then
\begin{align*}
Sd_t(F^{s_1+1})&=\sum_{i=1}^{s_2+q_2} k(n-1-(i-1)k)\\
&+\begin{cases}
(t_2+r)(n-1-(s_2+q_2)k), & 0\leq t_2+r<k,\\
k(n-1-(s_2+q_2)k)+(t_2+r-k)(n-1-(s_2+q_2+1)k), & k\leq t_2+r<2k.
\end{cases}
\end{align*}

\medskip\noindent\textbf{Case 3.1.} If $0\leq t_2+r<k$, then we get
\begin{align*}
&Sd_t(F^{s_1+1})-Sd_t(F^{s_2+1})\\
&=\sum_{i=s_2+1}^{s_2+q_2} k(n-1-(i-1)k)+(t_2+r)(n-1-(s_2+q_2)k)\\
&-\sum_{i=s_2+2}^{s_2+q_2+1} k(n-1-(i-2)k-r)-r(n-1-s_2k)-t_2(n-1-(s_2+q_2)k-r)\\
&=t_2(k+r)>0.
\end{align*}

\medskip\noindent\textbf{Case 3.2.} If $k\leq t_2+r<2k$, then we get
\begin{align*}
&Sd_t(F^{s_1+1})-Sd_t(F^{s_2+1})\\
&=\sum_{i=s_2+1}^{s_2+q_2} k(n-1-(i-1)k)+k(n-1-(s_2+q_2)k)+(t_2+r-k)(n-1-(s_2+q_2+1)k)\\
&-\sum_{i=s_2+2}^{s_2+q_2+1} k(n-1-(i-2)k-r)-r(n-1-s_2k)-t_2(n-1-(s_2+q_2)k-r)\\
&=k(k-r)+t_2r>0.
\end{align*}

Thus $Sd_t(F^{s_2+1})<Sd_t(F^{s_1+1})$.

\medskip\noindent\textbf{Case 4.} If $s_1 k+1\leq t\leq s_1 k+r$, then $Sd_t(F^{s_2+1})<Sd_t(F^{s_1+1})$.

Since $s_1 k+1\leq t\leq s_1 k+r$, we have $F^{s_2+1}[V_i]=\overleftrightarrow{K_k}$ for $i=1,2,\ldots,s_2,s_2+2,\ldots,s_1+1$, $F^{s_2+1}[V_{s_2+1}]=\overleftrightarrow{K_r}$, and $F^{s_1+1}[V_i]=\overleftrightarrow{K_k}$ for $i=1,2,\ldots,s_1$, $F^{s_1+1}[V_{s_1+1}]=\overleftrightarrow{K_r}$.
For $Sd_t(F^{s_1+1})$, let $t=s_1k+t_3$, where $0<t_3\leq r<k$. Then
$$Sd_t(F^{s_1+1})=\sum_{i=1}^{s_1} k(n-1-(i-1)k)+t_3(n-1-s_1k).$$
For $Sd_t(F^{s_2+1})$, since $t=s_2k+r+(s_1-s_2-1)k+(t_3+k-r)$, where $0<t_3+k-r<k$, then
\begin{align*}
Sd_t(F^{s_2+1})&=\sum_{i=1}^{s_2} k(n-1-(i-1)k)+r(n-1-s_2k)\\
&+\sum_{i=s_2+2}^{s_1} k(n-1-(i-2)k-r)+(t_3+k-r)(n-1-(s_1-1)k-r).
\end{align*}

So we get
\begin{align*}
&Sd_t(F^{s_1+1})-Sd_t(F^{s_2+1})\\
&=\sum_{i=s_2+1}^{s_1} k(n-1-(i-1)k)+t_3(n-1-s_1k)-r(n-1-s_2k)\\
&-\sum_{i=s_2+2}^{s_1} k(n-1-(i-2)k-r)-(t_3+k-r)(n-1-(s_1-1)k-r)\\
&=(k-r)(r-t_3)>0.
\end{align*}

Thus $Sd_t(F^{s_2+1})<Sd_t(F^{s_1+1})$.

\medskip\noindent\textbf{Case 5.} If $s_1 k+r+1\leq t\leq n$, then $Sd_t(F^{s_2+1})=Sd_t(F^{s_1+1})$.

We consider in reverse. Since $d_{F^{s_2+1}}^+(v_i^j)=n-1-(i-2)k-r$ and $d_{F^{s_1+1}}^+(v_i^j)=n-1-(i-2)k-r$, where $i=s_1+1,\ldots,q+1$ and $j=1,2,\ldots,k$, we get $Sd_t(F^{s_2+1})=Sd_t(F^{s_1+1})$.

\medskip
From Cases 1-5, we obtain $Sd_t(F^{s_2+1})\leq Sd_t(F^{s_1+1})$. Furthermore, $Sd_n(F^{s_2+1})=Sd_n(F^{s_1+1})$. That is, the outdegree descending sequence of $F^{s_1+1}$ majorizes $F^{s_2+1}$. Also, $d_{F^{s_2+1}}^+(v_i^j)\neq d_{F^{s_1+1}}^+(v_i^j)$ for all $v_i^j$. So from Karamata's inequality, we obtain $LE(F^{s_2+1})<LE(F^{s_1+1})$ for $s_2<s_1$. Therefore, we prove
$$LE(\overrightarrow{{F}_{n,k}^1})<LE(\overrightarrow{{F}_{n,k}^2})<\cdots<
LE(\overrightarrow{{F}_{n,k}^{q}})<LE(\overrightarrow{{F}_{n,k}^{q+1}}).$$

\end{proof}

Next, we consider the extremal Laplacian energy of $\overrightarrow{C_{k+1}}$-free digraphs when $k\geq3$.

\medskip\medskip
\noindent\textbf{Proof of Theorem~\ref{th:exLEC}:}

For $k\geq3$, assume that $G^\ast$ is a $\overrightarrow{C_{k+1}}$-free digraph having the maximum Laplacian energy. Similarly to the proof of Lemma~\ref{le:exMC3} and Theorem~\ref{th:exLEC3}, we consider two cases: (i) $e(G^\ast)=\textnormal{ex}(n,\overrightarrow{C_{k+1}})$; (ii) $e(G^\ast)<\textnormal{ex}(n,\overrightarrow{C_{k+1}})$.

\medskip\medskip
\noindent (i) $e(G^\ast)=\textnormal{ex}(n,\overrightarrow{C_{k+1}})$.

From Theorem~\ref{th:exC}, we know
$$\textnormal{ex}(n,\overrightarrow{C_{k+1}})=e(\overrightarrow{F_{n,k}})=\frac{1}{2}n^2+\frac{k-2}{2}n-\frac{r(k-r)}{2},$$
and $\textnormal{EX}(n,\overrightarrow{C_{k+1}})=\overrightarrow{\mathcal{F}_{n,k}}$, for $k\geq3$.
From Lemma~\ref{le:ordering}, we have
$$LE(\overrightarrow{{F}_{n,k}^1})<LE(\overrightarrow{{F}_{n,k}^2})<\cdots<
LE(\overrightarrow{{F}_{n,k}^{q}})<LE(\overrightarrow{{F}_{n,k}^{q+1}}).$$
So, when $0<r<k$, $\overrightarrow{{F}_{n,k}^{q+1}}$ is a $\overrightarrow{C_{k+1}}$-free digraph having the maximum Laplacian energy; when $r=0$, $\overrightarrow{\mathcal{F}_{n,k}}$ only have one type $\overrightarrow{{F}_{n,k}^0}$, and $\overrightarrow{{F}_{n,k}^0}$ is a $\overrightarrow{C_{k+1}}$-free digraph having the maximum Laplacian energy.

\medskip\medskip
\noindent (ii) $e(G^\ast)<\textnormal{ex}(n,\overrightarrow{C_{k+1}})$.

If $e(G^\ast)<\textnormal{ex}(n,\overrightarrow{C_{k+1}})$, we also only consider the case when $0<r<k$; the case when $r=0$ is similar.

First, we consider the First Zagreb Index of $\overrightarrow{C_{k+1}}$-free digraphs. Similarly to the proof of Lemma~\ref{le:exMC3}, we also can get a new digraph $F'$ by deleting some arcs in $\overrightarrow{{F}_{n,k}^{q+1}}$ such that $e(F')=e(G^\ast)$. In $\overrightarrow{{F}_{n,k}^{q+1}}$, we start deleting arcs from the vertex with the smallest outdegree. The deleting operations similarly to the table~\ref{ta:ch-1} in Lemma~\ref{le:exMC3}.

For $F'$, we know $F'$ is a $\overrightarrow{C_{k+1}}$-free digraph and $M_1(\overrightarrow{{F}_{n,k}^{q+1}})>M_1(F')$. We then increase or decrease the arcs in $F'$ to obtain the new digraph $G'$, where $e(F')=e(G')$. Similar to the proof of Claim~\ref{cl:ch-1} in Lemma~\ref{le:exMC3}, if the outdegree descending sequence of $F'$ and $G'$ satisfy the majorization, then $M_1(F')>M_1(G')$; if the outdegree descending sequence of $F'$ and $G'$ do not satisfy the majorization, then $G'$ has $\overrightarrow{C_{k+1}}$. So, we can not find a $\overrightarrow{C_{k+1}}$-free digraph $G$, such that $M_1(F')\leq M_1(G)$.

So if $e(G^\ast)<\textnormal{ex}(n,\overrightarrow{C_{k+1}})$, $\overrightarrow{{F}_{n,k}^{q+1}}$ is a $\overrightarrow{C_{k+1}}$-free digraph having the maximum First Zagreb Index.

Next, we consider the Laplacian energy of $\overrightarrow{C_{k+1}}$-free digraphs. Similarly to the proof of Theorem~\ref{th:exLEC3}, we need to consider the impact of arc transformations on $c_2$. We do the operations on the arcs in $F'$:
\begin{center}
\textbf{delete an arc with a tail of $v_{i_1}^{j_1}$ and add an arc with a tail of $v_{i_2}^{j_2}$}.
\end{center}
Let $F_1$ be the new digraph. Adding $(v_{i_2}^{j_2},v_{i'_2}^{j'_2})$, $c_2(F')$ may remain unchanged or increase by 2; deleting $(v_{i_1}^{j_1},v_{i'_1}^{j'_1})$, $c_2(F^{\frac{n+1}{2}})$ may remain unchanged or decrease by 2. So $c_2(F_1)=c_2(F')+2$, $c_2(F_1)=c_2(F')-2$ or $c_2(F_1)=c_2(F')$.

Similarly to the proof of Claim~\ref{cl:ch-1} of Lemma~\ref{le:exMC3}, we will always find a $\overrightarrow{C_{k+1}}$ in $F_1$ if $i_1\geq i_2$. If $i_1<i_2$, then $d_{F_1}^+(v_{i_1}^{j_1})=d_{F'}^+(v_{i_1}^{j_1})-1$, $d_{F_1}^+(v_{i_2}^{j_2})=d_{F'}^+(v_{i_2}^{j_2})+1$, and $d_{F'}^+(v_{i_1}^{j_1})-d_{F'}^+(v_{i_2}^{j_2})\geq3$. We have
\begin{align*}
LE(F_1)&=M_1(F_1)+c_2(F_1)\\
&=M_1(F')-\left(d_{F'}^+(v_{i_1}^{j_1})\right)^2-\left(d_{F'}^+(v_{i_2}^{j_2})\right)^2
+\left(d_{F'}^+(v_{i_1}^{j_1})-1\right)^2+\left(d_{F'}^+(v_{i_2}^{j_2})+1\right)^2+c_2(F_1)\\
&\leq M_1(F')-2\left(d_{F'}^+(v_{i_1}^{j_1})-d_{F'}^+(v_{i_2}^{j_2})\right)+2+c_2(F')+2\\
&=LE(F')-2\left(d_{F'}^+(v_{i_1}^{j_1})-d_{F'}^+(v_{i_2}^{j_2})\right)+4\\
&\leq LE(F')-6+4\\
&=LE(F')-2.
\end{align*}
So $LE(F_1)<LE(F')$.

Since
$$
LE(\overrightarrow{{F}_{n,k}^{q+1}})=M_1(\overrightarrow{{F}_{n,k}^{q+1}})+c_2(\overrightarrow{{F}_{n,k}^{q+1}})
>M_1(F')+c_2(F')=LE(F')>LE(F_1),$$
$\overrightarrow{{F}_{n,k}^{q+1}}$ is a $\overrightarrow{C_{k+1}}$-free digraph having the maximum Laplacian energy if $e(G^\ast)<\textnormal{ex}(n,\overrightarrow{C_{k+1}})$.

\medskip\medskip
In conclusion, we obtain:

\noindent (a) if $0<r<k$, then
\begin{align*}
\textnormal{ex}_{LE}(n,\overrightarrow{C_{k+1}})&=LE(\overrightarrow{{F}_{n,k}^{q+1}})\\
&=\sum_{i=1}^q k(n-1-(i-1)k)^2+r(n-1-qk)^2+qk(k-1)+r(r-1)\\
&=\frac{1}{3}n^3+\left(\frac{k}{2}-1\right)n^2+\frac{k^2}{6}n+\frac{2}{3}r^3-\frac{k}{2}r^2-\frac{k^2}{6}r,
\end{align*}
and $\textnormal{EX}_{LE}(n,\overrightarrow{C_{k+1}})=\overrightarrow{{F}_{n,k}^{q+1}}$;

\noindent (b) if $r=0$, then
\begin{align*}
\textnormal{ex}_{LE}(n,\overrightarrow{C_{k+1}})&=LE(\overrightarrow{{F}_{n,k}^0})\\
&=\sum_{i=1}^q k(n-1-(i-1)k)^2+qk(k-1)\\
&=\frac{1}{3}n^3+\left(\frac{k}{2}-1\right)n^2+\frac{k^2}{6}n,
\end{align*}
and $\textnormal{EX}_{LE}(n,\overrightarrow{C_{k+1}})=\overrightarrow{{F}_{n,k}^0}$.

This completes the proof of Theorem~\ref{th:exLEC}.

$\hfill\square$

\section{Concluding remarks}\label{sec4}

In Sections~\ref{sec2} and~\ref{sec3} of this paper, we fully characterized the extremal digraphs in $\overrightarrow{C_{k+1}}$-free digraphs having the maximum Laplacian energy.
From the proof of Lemma~\ref{le:exMC3}, Theorems~\ref{th:exLEC} and \ref{th:exLEC3}, the principle of the proof is:
\begin{itemize}
  \item [1.] According to the Tur\'{a}n number results for $\overrightarrow{C_{k+1}}$-free digraphs, we first propose a conjecture about the extremal digraph in $\overrightarrow{C_{k+1}}$-free digraphs having the maximum Laplacian energy, denoted as $G^\ast$.
  \item [2.] Suppose that $G^\ast$ in $\overrightarrow{C_{k+1}}$-free digraphs has the maximum First Zagreb Index. Starting from the original digraph $G^\ast$, we obtain a new digraph $G'$ by deleting arcs and adding arcs, ensuring that the number of arcs remains unchanged. Using the Karamata's inequality, if the outdegree descending sequence of $G^\ast$ and the new digraph $G'$ satisfy the majorization, then $M_1(G^\ast)\leq M_1(G')$, but $G'$ must have $\overrightarrow{C_{k+1}}$; if the outdegree descending sequence of $G^\ast$ and $G'$ do not satisfy the majorization, then $M_1(G^\ast)>M_1(G')$, conforming to the assumption.
  \item [3.] Finally, consider the influence of $c_2$ on the Laplacian energy. When $k\geq3$, there is no effect. When $k=2$, we again find other extremal digraphs.
\end{itemize}

We found that the extremal digraphs in $\overrightarrow{C_{k+1}}$-free digraphs have the maximum Laplacian energy also have the maximum number of arcs. However, there are more extremal digraphs in $\overrightarrow{C_{k+1}}$-free digraphs with the maximum number of arcs. Could it be that similar results hold for other forbidden subdigraphs? This is also a problem we can explore further.

In~\cite{ZhLi}, Zhou and Li also determined the Tur\'{a}n number and the extremal digraphs for $\overrightarrow{P_{k+1}}$. So we leave an open  problem as follows. Interested readers may also consider other forbidden subdigraphs.

\noindent\begin{problem}\label{pr:ch-4.1} Characterize the extremal digraphs in $\overrightarrow{P_{k+1}}$-free digraphs having the maximum Laplacian energy.
\end{problem}

In addition, the usual spectral Tur\'{a}n problem considers the maximum adjacency spectral radius. What results can be obtained for the $\overrightarrow{C_{k+1}}$-free digraphs have the maximum adjacency spectral radius? Will its extremal digraphs still have the maximum Laplacian energy? That is another challenging open problem.

\noindent\begin{problem}\label{pr:ch-4.2} Characterize the extremal digraphs in $\overrightarrow{C_{k+1}}$-free digraphs having the maximum adjacency spectral radius, where $k\geq2$.
\end{problem}

Actually, In~\cite{BrLi,Dr}, the authors found the extremal digraphs of $\overrightarrow{C_2}$-free digraphs having the maximum adjacency spectral radius. Its extremal digraphs are differ from the extremal digraph having the maximum Laplacian energy, but have the maximum number of arcs. Therefore, in the structure of forbidding specific subdigraphs, what is the relationship among the extremal digraphs with the maximum number of arcs, the maximum Laplacian energy, and the maximum adjacency spectral radius? These are all problems worthy of our careful consideration.

\end{document}